\tikzset{commutative diagrams/.cd,arrow style=tikz,diagrams={>=latex}}
\newtheorem*{cor*}{Corollary}
\newtheorem{theorem}{Theorem}
\newtheorem*{theorem*}{Theorem}
\newtheorem{lemma}{Lemma}
\newtheorem{prop}[theorem]{Proposition}
\newtheorem{cor}{Corollary}
\newtheorem{thm}{Theorem}
\theoremstyle{definition}
\newtheorem*{remark*}{Remark}
\newtheorem*{example*}{Example}
\newcommand{\comment}[1]{}
\def\im{\operatorname{Im}} \def\ker{\operatorname{Ker}} 
 \def\Q{\mathbb{Q}}\def\Z{\mathbb{Z}}\def\RR{\mathbb{RR}}
\def\le{\leqslant} \def\ge{\geqslant}
\def\tr{\operatorname{tr}}
\def\SL{\mathrm{SL}} \def\PSL{\mathrm{PSL}}\def\GL{\mathrm{GL}}
\def\I{\mathcal{I}} \def\J{\mathcal{J}}
 \def\CC{\mathfrak{C}}
 \def\R{\mathcal{R}} \def\RR{\mathbb{R}}
\def\M{\mathcal{M}}\def\F{\mathcal F}
\def\A{\mathcal A} \def\B{\mathcal B} \def\T{\mathcal T}
\def\CC{\mathbb{C}}
 \def\a{\mathfrak a}
 \def\DD{\Delta} \def\G{\Gamma}
\def\ID{\I_\Delta} \def\JD{\J_\Delta} 
\def\RD{\mathcal{R}_\Delta}
\def\AD{\mathcal{A}_\Delta}\def\BD{\mathcal{B}_\Delta}
\def\a{\alpha}\def\b{\beta}\def\g{\gamma}\def\wa{\widetilde{a}}
\def\+{\,+\,} \def\m{\,-\,} \def\={\;=\;} \def\inn{\;\in\;}
\def\sm#1#2#3#4{\left(\begin{smallmatrix}#1&#2 \\ #3 & #4 \end{smallmatrix}\right)}
\def\ma#1#2#3#4{\left(\begin{matrix}#1&#2 \\ #3 & #4 \end{matrix}\right)}
\def\smm#1#2{\left\la\begin{smallmatrix}#1 \\ #2  \end{smallmatrix}\right\ra}
\def\smp#1#2{\left[\begin{smallmatrix}#1 \\ #2  \end{smallmatrix}\right]}
\def\be{\begin{equation}}  \def\ee{\end{equation}}
\def\wf{\widetilde{f}}
\def\H{\mathcal{H}}
\def\im{\operatorname{Im}} \def\re{\operatorname{Re}}
\def\aa{\alpha}\def\bb{\beta}
\def\dd{\delta}
\def\la{\langle} \def\ra{\rangle} \def\wT{\widetilde T} 
\def\wR{\widehat \R} 
\def\bsh{\backslash}
\def\rar{\rightarrow}
\def\bd{\circ}
\def\w{\textsf{w}}
\subjclass[2010]{11F11, 11F25, 11F67}
\keywords{Trace formula, Hecke operators, holomorphic modular forms  }
\title[The Eichler-Selberg trace formula]{An elementary proof of \\ the 
Eichler-Selberg trace formula}
\author{Alexandru A. Popa and Don Zagier}
\address{Institute of Mathematics ``Simion Stoilow" of the Romanian Academy,
P.O. Box 1-764, RO-014700 Bucharest, Romania}
\email{alexandru.popa@imar.ro}
\address{Max-Planck-Institut f\"ur Mathematik, Vivatsgasse 7, 53111 Bonn, Germany}
\email{don.zagier@mpim-bonn.mpg.de}
\begin{document}
\begin{abstract}
We give a purely algebraic proof of the trace formula for Hecke operators on modular forms 
for the full modular group $\mathrm{SL}_2(\Z)$, using the action of Hecke operators 
on the space of period polynomials. This approach, which can also be applied 
for  congruence subgroups, is more elementary than the classical ones
using kernel functions, and avoids the analytic difficulties inherent in the latter
(especially in weight two). 
Our main result is  an algebraic property of a special Hecke element that involves 
neither period polynomials nor modular forms, yet immediately implies both the trace 
formula and the classical Kronecker-Hurwitz class number relation. This key property 
can be seen as providing a bridge between the conjugacy classes and the right cosets 
contained in a given double coset of the modular group.
\end{abstract}
\maketitle

\section{Introduction}

Our aim in this paper is to give a short, algebraic proof of the trace formula for Hecke operators 
on modular forms for the full modular group. We use the action of Hecke operators 
on the space of period polynomials associated to modular forms, 
bringing to completion an idea introduced by the second author 25 years 
ago~\cite{Z}. A completely different (and considerably more complicated) proof 
based also on the action of Hecke operators on period polynomials was given 
in~\cite{Z2}. The proof given here depends on purely algebraic properties of a special Hecke element, 
independent of its action on period polynomials.
The same Hecke element has been used by the first author in 
two sequels to this paper to obtain simple trace formulae on modular forms for 
congruence subgroups as well~\cite{P,P2}. Our approach is more elementary
than the classical automorphic kernel method, and applies uniformly in all weights, whereas
the classical approach requires additional technicalities in weight two.

Let $\Gamma$ be the group $\mathrm{PSL}_2(\Z)$, which is generated by 
the matrices $S=\sm0{-1}10$ and $U=\sm 1{-1}10$,  modulo the relations 
$S^2=U^3=1$, and let $T=US=\sm 1101$. For $n\ge 1$, let $\M_n$ be the set 
of $2\times2$ matrices with integer entries of determinant~$n$, modulo~$\{\pm 
1\}$,
and let $\R_n=\Q[\M_n]$. The group~$\G$ acts both on the left and on the right 
on the $\Q$-vector space 
$\R_n$. It was shown in~\cite{CZ} that an element $\wT_n\in\R_n$ acts as 
the $n$th Hecke operator on period polynomials if
\be\label{A}   \tag{A}
 (1-S)\wT_n \m T_n^\infty(1-S) \inn \, (1-T)\R_n
\ee
where $T_n^\infty=\sum_{M\in \M_n^\infty} M$, for $\M_n^\infty\subset \M_n$ any 
set of representatives for $\G\bsh\M_n$ fixing~$\infty$. It was stated in 
\cite{Z} that there exists such an element $\wT_n$ which 
further satisfies the properties
\begin{equation}\label{B}\tag{B} 
\begin{cases}   \quad\qquad\wT_n(1+S) &\in \quad(1+U+U^2) \R_n\;,  \\  
                 \quad \wT_n (1+U+U^2) &\in\quad (1+S)\R_n\;,
\end{cases}
  \end{equation}
and shown that any such element would lead to an explicit formula 
for the traces of Hecke operators on modular forms on~$\G$. The proof 
of existence, omitted in~\cite{Z}, will be given below in Lemma~\ref{L4}.

If $\mathcal{S}$ is any subset of~$\M_n$ and $\xi=\sum c_MM\in \R_n$, 
we denote by $\la\xi,\mathcal{S}\ra$ the number $\sum_{M\in \mathcal{S}}c_M$. 
There are three natural actions of~$\G$ on~$\R_n$, by left multiplication, 
right multiplication, and conjugation, and we will be particularly interested 
in the cases when~$\mathcal{S}$ is an orbit with respect to the second or 
third of these, i.e., a right coset $K=M_0 \G$ or a $\G$-conjugacy class 
$X=\{\g^{-1} M_0 \g\mid  \g\in\G \}$,  respectively, where $M_0\in \M$.

The $\G$-conjugacy classes of elements $M\in\M_n$ are of five types: 
{\it scalar} ($M=\pm\sqrt n\cdot I_2$), {\it elliptic} ($\tr(M)^2<4n$), 
{\it split hyperbolic} ($\tr(M)^2-4n$ a positive square), 
{\it non-split hyperbolic} ($\tr(M)^2-4n$ a positive non-square), 
and {\it parabolic} ($\tr(M)^2=4n$, $M\ne\,$scalar). We define the {\it weight} 
$w(M)$ in these five cases by the formulae\vspace{-3mm} 
\begin{table}[h] \centering
\extrarowsep=_3pt^3pt
\begin{tabu}{c|[1.3pt]c|c|c|c|c}
$M$ & scalar & elliptic  & split hyperbolic & non-split hyperbolic  & parabolic \\ 
\tabucline[1.3pt]-
  $w(M)$ & 1/6 & $-1/|\G_M|$ & 1 & 0 & 0\\ 
\end{tabu}
\end{table}
\vspace{-3mm}

\noindent where $\G_M$ is the centralizer of~$M$ in~$\G$,
which for elliptic~$M$ has order equal to~2 or~3 if $M$ is conjugate to a 
matrix in $\Z\, I_2+\Z\, S$ or $\Z\, I_2+\Z\, U$, respectively, 
and to $1$ in all 
other cases. Note that in the last three cases we can also write 
$w(M)=1/|\G_M|$, with the convention that $1/\infty=0$ if $|\G_M|=\infty$.
Since $w(M)$ depends only on the conjugacy class~$X$ of~$M$, we can also denote 
it by~$w(X)$. 

Our main result can then be stated as follows.
\begin{theorem}\label{T0} Let $n$ be a positive integer, and let 
$\wT_n\in\R_n$ satisfy both~\eqref{A} and~\eqref{B}.
\begin{enumerate}
  \item For any right $\G$-coset $K\subset\M_n$ we have $\la\wT_n,K\ra=-1$.
  \item For any $\G$-conjugacy class $X$ we have 
  \be \tag{C}\label{C} \la\wT_n,X\ra \= w(X) \;.\ee  
\end{enumerate}
\end{theorem}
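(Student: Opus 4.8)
The plan is to study the three linear functionals $\xi\mapsto\la\xi,\Sc\ra$ and to exploit how each transforms under the left-multiplication, right-multiplication and conjugation actions of~$\G$, feeding the relations~\eqref{A} and~\eqref{B} through them. First I would record the equivariances. For a right coset $K=M_0\G$ the functional $\la\cdot,K\ra$ is invariant under right multiplication by~$\G$, while left multiplication merely permutes the right cosets; concretely $\la\g\xi,K\ra=\la\xi,\g^{-1}K\ra$ and $\la\xi\g,K\ra=\la\xi,K\ra$ for $\g\in\G$. For a conjugacy class $X$ the functional $\la\cdot,X\ra$ is conjugation-invariant, which is equivalent to the identity $\la\g\xi,X\ra=\la\xi\g,X\ra$. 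These identities are what allow~\eqref{A} and~\eqref{B} to be converted into linear relations among the numbers $\la\wT_n,K\ra$ and $\la\wT_n,X\ra$.

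For part~(i) I would first pass to the coinvariant module $V=\Q[\M_n/\G]$ of right cosets, via the map $p\colon\R_n\to V$ sending $\xi$ to the vector of its coefficient sums $\la\xi,K\ra$; this is a morphism of left $\G$-modules. In any rational permutation module the image of $1+S$ equals the $S$-invariants and the image of $1+U+U^2$ equals the $U$-invariants, so applying $p$ to~\eqref{B} forces $v:=p(\wT_n)$ to be fixed by both $S$ and $U$, hence by all of $\G=\la S,U\ra$. Thus $\la\wT_n,K\ra$ is constant on each $\G$-orbit of right cosets, i.e.\ depends only on the double coset $\G K$. To identify this constant as $-1$ one cannot use the invariant projection alone, since every right-invariant functional annihilates the inhomogeneous term $T_n^\infty(1-S)$ of~\eqref{A} (because $T_n^\infty S$ and $T_n^\infty$ have the same coefficient sum on each right coset). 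Instead I would pair the full relation~\eqref{A} with finer, non-invariant functionals and reconstruct the double-coset value from the explicit upper-triangular representatives in~$\M_n^\infty$.

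For part~(ii) the same strategy is far more productive, precisely because conjugation invariance is not right invariance. Pairing~\eqref{A} with $\la\cdot,X\ra$, the inhomogeneous term now survives: $\la T_n^\infty(1-S),X\ra$ equals the difference between the number of matrices of~$\M_n^\infty$ lying in $X$ and the number lying in $XS$. Since an upper-triangular integer matrix has $\tr^2\ge 4n$, these counts vanish for elliptic and non-square classes and are explicit integers otherwise, which should reproduce the entries $0$ and $1$ of the weight table in the parabolic, non-split and split hyperbolic cases. The denominators $2,3,6$ attached to the elliptic and scalar weights $-1/|\G_M|$ and $1/6$ would then come from~\eqref{B}, whose operators $1+S$ and $1+U+U^2$ introduce exactly the divisions by $2$ and $3$ governed by the order of the centralizer~$\G_M$.

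The main obstacle I anticipate is the error term $(1-T)\eta$ arising from membership in $(1-T)\R_n$ in~\eqref{A}. For right cosets it is harmless, but for a conjugacy class it is not annihilated term by term, since $T^{-1}X\ne X$, and because $T$ has infinite order one cannot simply telescope over the cyclic group $\la T\ra$. Showing that this contribution nevertheless cancels — by organizing the finitely supported data of $\eta$ along the finitely many $\la T\ra$-orbits that actually meet its support, and reconciling this cancellation with the fractional weights produced by~\eqref{B} — is the technical heart of the argument, together with the clean separation of the five conjugacy types and the determination of the single constant $-1$ in part~(i).
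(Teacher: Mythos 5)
Your structural observations are sound as far as they go: the constancy of $\la\wT_n,K\ra$ on the cosets within a double coset does follow from~\eqref{B} exactly as you describe (this is part (a) of Theorem~\ref{T2}), and you correctly diagnose that the invariant projection kills the inhomogeneous term of~\eqref{A}, so that pinning down the value $-1$ requires something finer. But at that point part~(i) is left as a gesture (``pair with finer, non-invariant functionals''); in the paper this is the content of Theorem~\ref{T2}(b), which requires constructing an explicit element of $\B_\Delta$ with prescribed coset sums and isolating the part of~\eqref{Aa} supported on the single coset $\sm 100n\G$ --- a genuinely nontrivial computation, not a routine equivariance argument.

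The more serious problems are in part~(ii). First, a concrete error: you claim both counts in $\la T_n^\infty(1-S),X\ra$ vanish for elliptic $X$ because upper-triangular matrices satisfy $\tr^2\ge 4n$. That disposes of $\la T_n^\infty,X\ra$, but $\la T_n^\infty S,X\ra$ counts upper-triangular $M=\sm ab0d$ with $MS=\sm b{-a}d0\in X$, whose trace is $b$; these matrices are very often elliptic, so this term does not vanish and your derivation of the weight table collapses. Second, you yourself flag the contribution of the $(1-T)\R_n$ error term under $\la\cdot,X\ra$ as ``the technical heart'' and leave it unresolved --- and it really is fatal to the direct approach, since conjugation invariance gives no telescoping over the infinite-order element $T$. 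The paper avoids this entire route: Corollary~\ref{C2} shows that $\la\wT_n,X\ra$ is independent of the choice of $\wT_n$ satisfying \eqref{A} and~\eqref{B}, because two such elements differ by an element of $\J$ (Lemma~\ref{L4}), which is spanned by commutators $M-\g M\g^{-1}$ and hence is annihilated by every conjugacy-class functional. One then needs only to exhibit a single element satisfying~\eqref{C} by construction and verify \eqref{B} and the coset condition for it; that explicit construction and verification occupy all of Section~\ref{S3} and have no counterpart in your proposal. Without either resolving the $(1-T)$ obstruction or supplying such an explicit element, the argument for~\eqref{C} is missing its essential ingredient.
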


We will show in the next section that the theorem immediately implies 
the Eichler-Selberg trace formula for modular forms for $\G$. As a warm-up, 
and to introduce the class numbers, we observe that computing $\la\wT_n,\M_n 
\ra$ in two ways, using parts (i) and (ii) of the theorem,
yields the Kronecker-Hurwitz class number relation in the form:
\be\label{0}
\sum_{X\subset \M_n} w(X)\=-|\G\bsh\M_n|\;, \ee
where the sum is over all conjugacy classes $X$. To bring the formula to 
its classical form, we use the $\G$-equivariant bijection 
$\sm abcd \leftrightarrow cx^2 +(d-a) xy -b y^2$
between integral matrices of determinant~$n$ and trace~$t$ 
and quadratic forms of discriminant $t^2-4n$ to write: 
\be\label{HD}
\sum_{\substack{X\subset \M_n\\ \tr(X)=\pm t }} w(X) \= 
\begin{cases}
-2H(4n-t^2)  & \text{ if  } t\ne 0 \;,\\
- H(4n-t^2) & \text{ if  } t= 0 \;,
\end{cases}
\ee
where $\tr(X)$ is the trace of any element in the conjugacy class $X$ 
(well-defined up
to sign), and $H(D)$ is the Kronecker-Hurwitz class number, extended to all 
$D\in \Z$ as in~\cite{Z}. 
That is, for $D>0$, $H(D)$ equals the number of $\G$-equivalence classes 
of  positive definite integral binary quadratic forms of discriminant~$-D$, 
with those classes that contain a multiple of $x^2+y^2$ or of $x^2-xy+y^2$ 
counted with multiplicity $1/2$ or $1/3$, respectively, $H(0)=-1/12$, $H(D)=0$
if $D<0$ is not the negative of a perfect square, and $H(-u^2)=-u/2$ if $u\in 
\Z_{>0}$. 

Using~\eqref{HD}, the relation above becomes $\sum_{t\in \Z} H(4n-t^2)=\sum_{d|n} d$, and 
the classical Kronecker-Hurwitz relation 
\[\sum_{t^2\le 4n} H(4n-t^2) \= \sum_{n=ad,\; a,d>0} \max(a,d)
\]
follows by observing that 
$\sum_{t^2>4n} H(4n-t^2) = \sum_{n=ad, d>a>0} a-d $. We gave a yet 
different, simpler proof of (a refinement of) the Kronecker-Hurwitz formula in~\cite{PZ}. 
As a by-product of the proof of Theorem~\ref{0}, we will obtain a different 
refinement here (Proposition~\ref{P5}).

We prove part (i) of the theorem in Section~\ref{S2} (Corollary~\ref{C1}), 
after a preliminary study of the relation between operators~$\wT_n$ satisfying 
only one of properties \eqref{A},~\eqref{B}. The hardest part of this approach 
to the trace formula is the proof of part~(ii), given in Section~\ref{S3}. We give an 
explicit element $\wT_n$ satisfying part~(ii) by construction, and then show that it 
satisfies part (i), as well as property~\eqref{B}. It then follows from the 
theory in Section~\ref{S2} that $\wT_n$ satisfies~\eqref{A} as well, and Theorem~\ref{T0}
follows.

The proof of the trace formula in Section~\ref{s2} does not require the 
full statement of Theorem~\ref{T0}, but only the existence of an element $\wT_n$ 
satisfying properties \eqref{A}, \eqref{B} and \eqref{C}.  
For example, to finish the proof given in Section~\ref{s2} for 
the trace of the first two Hecke operators, it is enough 
to check that the following elements satisfy~\eqref{A}--\eqref{C}:
 \[ \wT_1=I_2-\frac 12 (I_2+S)-\frac 13 (I_2+U+U^2)\;, \] 
\[
\wT_2= \ma 2001-\frac 12 \ma 11{-1}1 -\frac 12 \ma 1{-1}{1}1 -\ma 1{-1}20 
-\ma 02{-1}1-\ma 0{-2}10\;.
\]
(Both of these expressions were given in~\cite{Z}; the element $\wT_2$ constructed in 
Section~\ref{S3} is different from the above).  

While the methods of this paper are elementary, we point out that the formula
we obtain has a cohomological interpretation with wide-ranging generalizations. 
Let $V$ be a finite-dimensional complex $\SL_2(\RR)$-module. 
For a $\G$-double coset $\DD$ we denote by $[\DD]$ the corresponding action on 
the group cohomology $H^i(\G,V)$. Then we have the following formula for the 
Lefschetz number of the correspondence on the modular surface determined by the 
double coset~$\DD$:
\be\label{3}
\sum_i (-1)^i \tr([\DD], H^i(\G,V)) \= -\sum_{X\subset \DD/\{\pm 1\}} w(X) 
\tr(M_X, V) 
\ee
where the sum is over $\G$ conjugacy classes $X$ with representatives 
$M_X\in\DD$. The results of this paper can be interpreted as proving this 
formula for irreducible representations $V$, and therefore for all 
finite-dimensional representations. Indeed, for the trivial module $V=\CC$ only 
$H^0$ is nonzero and the formula reduces to the Kronecker-Hurwitz
relation~\eqref{0}. If $V=V_\w$ is the unique irreducible representation of 
$\SL_2(\RR)$ of odd dimension $\w+1\ge 3$, only $H^1$ is nonzero, and the 
formula above is proved in~\eqref{1}, taking into account the 
Eichler-Shimura isomorphism $H^1(\G,V_\w)\simeq S_{k}\oplus M_{k}$, 
with $M_{k}$, $S_k$ the spaces of modular forms, respectively cusp forms of 
weight $k=\w+2$ for~$\G$.
In a sequel to this paper~\cite{P}, the first author has
proved the cohomological trace formula~\eqref{3} for arbitrary congruence 
subgroups $\G$ of the modular group, under a mild assumption on the double coset 
$\DD$. Surprisingly, Theorem~\ref{T0} is central for the proof in the congruence
subgroup case as well. 

Another case where the formula~\eqref{3} is known for more general groups~$\G$ 
is when~$\DD=\G$, the trivial double coset, in which case the left hand side reduces to an 
Euler-Poincar\'e characteristic. For a large class of groups~$\G$ including
arithmetic subgroups of linear groups, it follows from work of Bass and 
Brown~\cite{Ba,Br} that (assuming for simplicity that~$\G$ has trivial center)
\[\sum_i (-1)^i \dim H^i(\G,V) =\sum_{M\in T(\G)} \chi(\G_M)\tr(M,V)
\]
where $T(\G)$ is a set of representatives for the conjugacy classes of $\G$,
$\G_M$ denotes the centralizer of~$M$, and  $\chi(G)\in \Q$ denotes the 
homological Euler-Poincar\'e characteristic of the group~$G$. The sum 
on the right hand side actually runs over finite order elements in $T(\G)$, 
because of a theorem of Gottlieb-Stalling that states that~$\chi(G)=0$ 
if the group~$G$ contains an infinite order element 
in the center. Moreover, if~$G$ is finite we have $\chi(G)=1/|G|$, and  
$\chi(\PSL_2(\Z))=-1/6$,  so for $\G=\PSL_2(\Z)$ the formula specializes 
to the case $\DD=\G$ of~\eqref{3}. Therefore we have a 
geometric interpretation of the coefficients $w(X)$ in~\eqref{3}--they are 
negatives of ``local'' Euler-Poincar\'e characteristics.  

Perhaps the ultimate generalization of~\eqref{3} is the topological trace 
formula of Goresky and MacPherson~\cite{GM}, where $\G$ can be 
any arithmetic subgroup of a reductive group, and the algebraic group cohomology 
can be replaced by other geometric cohomology theories. Our work can therefore 
also be seen as giving an elementary proof of an explicit version of the topological 
trace formula, in the special case of the modular group~$\PSL_2(\Z)$.

\section{Deduction of the trace formula}\label{s2}

Let $\w\ge 0$ be an even integer, and let $V_\w$ be the space of complex 
homogeneous polynomials of degree $\w$. The group $\GL_2(\RR)$ acts on $V_\w$ 
by $P|\g (X,Y)=P(aX+bY, cX+dY)$ for $\g=\pm\sm abcd$, which extends 
by linearity to a right action of the algebra $\R=\bigoplus_{n\ge 1} \R_n$, where 
$\R_n=\Q[\M_n]$ as above. The space of period polynomials $W_\w$ is defined as 
$W_\w=\ker(1+S)\cap \ker(1+U+U^2)\,. $

For even $k>2$, let $M_{k}$, $S_{k}$ be the spaces of modular forms, 
respectively  cusp forms of weight $k$ for $\G$, so we have 
$M_{k}=\CC G_{k} \oplus S_{k}$, with $G_k$ a suitably normalized 
Eisenstein series of weight $k$. To each $f\in S_{k}$ we associate 
its period polynomial $P_f(X,Y)=\int_0^{i\infty}f(z) (X-zY)^{\w} dz$, where 
$\w=k-2$, which is an
element of $W_{\w}$. Its even and odd parts, $P_f^+$ and $P_f^-$, also belong 
to $W_{\w}$. One can also define $P_f^+$ for non-cuspidal $f$ (see~\cite{Z1}),
and we have $P_{G_k}^+(X,Y)=\lambda_k (X^{\w}-Y^{\w})$ for a certain number 
$\lambda_k\ne 0$.

For $n\ge 1$, the Hecke operator $T_n$ acts on $M_{k}$ by 
$f|T_n=n^{k-1}\sum_{M\in\G\bsh \M_n} f|_{k}M$, with 
$f|_k M (z)=f(Mz)(cz+d)^{-k}$ for $M=\sm **cd \in \GL_2(\RR)$. The 
Eichler-Shimura isomorphism 
can then be stated as follows. For a generalization to modular forms 
for congruence subgroups see~\cite{PP}. 
\begin{prop}\label{P1} Let $\w>0$ be even. We have a Hecke-equivariant 
isomorphism 
\[ M_{\w+2}\oplus S_{\w+2}\;\simeq\; W_\w \,, \quad 
 (f,g)\mapsto P_f^+ + P_g^-\,\]  
where the action of the Hecke operator $T_n$ on period polynomials $P\in W_\w$
is defined by $P|T_n=P|\wT_n$ for any element $\wT_n\in \R_n$ 
satisfying~\eqref{A}.  
\end{prop}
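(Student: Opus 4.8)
The plan is to separate the two assertions contained in the proposition---that the displayed map is a linear isomorphism and that it is Hecke-equivariant---and to treat them independently. Since $P_f^+$ and $P_g^-$ lie respectively in the $+1$- and $-1$-eigenspaces $W_\w^\pm$ of the involution $P(X,Y)\mapsto P(X,-Y)$ (note that on $V_\w$ this agrees with $P\mapsto P(-X,Y)$ because $\w$ is even), and this involution is respected by both sides, I would first decompose the map into its even and odd halves, $f\mapsto P_f^+$ from $M_{\w+2}$ to $W_\w^+$ and $g\mapsto P_g^-$ from $S_{\w+2}$ to $W_\w^-$, and prove each half is bijective.

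The first step is to check that the period map really lands in $W_\w$. For a cusp form this is a direct manipulation of the defining integral: the substitution $z\mapsto-1/z$ combined with the weight-$(\w+2)$ modularity of $f$ under $S$ yields $P_f|S=-P_f$, so $P_f\in\ker(1+S)$; and since the truncated integrals $\int_\alpha^\beta f(z)(X-zY)^\w\,dz$ transform under $\g\in\G$ by replacing the endpoints $(\alpha,\beta)$ with $(\g^{-1}\alpha,\g^{-1}\beta)$, writing the path from $0$ to $i\infty$ as a sum of three pieces permuted cyclically by $U$ gives $P_f|(1+U+U^2)=0$ once the boundary points telescope. Hence $P_f\in W_\w$, and taking even and odd parts keeps us inside $W_\w^\pm$. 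For the non-cuspidal directions I would import from~\cite{Z1} the definition of $P_f^+$ for $f\in M_{\w+2}$, together with the fact $P_{G_k}^+=\lambda_k(X^\w-Y^\w)$ with $\lambda_k\ne0$; a one-line check shows $X^\w-Y^\w$ satisfies both defining relations of $W_\w$ and is even.

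Bijectivity I would then obtain from injectivity plus a dimension count. Injectivity is the only genuinely analytic input: Haberland's formula expresses the Petersson inner product on $S_{\w+2}$ as a non-degenerate pairing between even and odd period polynomials, which forces $g\mapsto P_g^-$ to be injective and, together with $P_{G_k}^+\ne0$ separating the Eisenstein line from the even periods of cusp forms, forces $f\mapsto P_f^+$ to be injective on all of $M_{\w+2}$. For surjectivity I would compute $\dim W_\w^\pm$ directly from $W_\w=\ker(1+S)\cap\ker(1+U+U^2)$, using that $S$ acts as an involution and $U$ with order $3$ on $V_\w$, and that the two kernels span $V_\w$ (their orthogonal complements for the invariant pairing meet in $V_\w^\G=0$ for $\w>0$). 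The resulting eigenspace bookkeeping gives $\dim W_\w^-=\dim S_{\w+2}$ and $\dim W_\w^+=\dim S_{\w+2}+1=\dim M_{\w+2}$, so the two injective maps are forced to be isomorphisms. I expect this bookkeeping---pinning down the single extra even dimension and verifying it is exactly the Eisenstein class $X^\w-Y^\w$---to be the main obstacle, whereas Haberland's formula can simply be quoted.

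It remains to prove $P_{f|T_n}=P_f|\wT_n$. Expanding $f|T_n=n^{k-1}\sum_{M\in\G\bsh\M_n}f|_kM$ and substituting $w=Mz$ in each period integral rewrites $P_{f|T_n}$ as $P_f$ acted on by $\sum_M M\in\R_n$, but with the path of integration moved from $(0,i\infty)$ to $(M0,M\infty)$; the resulting boundary contributions at the cusps in $\PP^1(\Q)$ are precisely what the cusp-fixing representatives in $\M_n^\infty$ (through $T_n^\infty$) and the ambiguity modulo $(1-T)\R_n$ are designed to absorb. This is exactly the content of property~\eqref{A}, so I would invoke the result of~\cite{CZ} that any $\wT_n$ satisfying~\eqref{A} acts on $W_\w$ as the $n$th Hecke operator. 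This yields both the intertwining and its independence of the chosen $\wT_n$ (two solutions of~\eqref{A} differ by an element acting trivially on $W_\w$), which, combined with the isomorphism above, completes the proof.
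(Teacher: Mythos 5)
Your proposal is sound in outline, but it inverts the paper's division of labor: the paper simply cites \cite{CZ} and \cite{Z} for the fact that the displayed map is an isomorphism (the Haberland-pairing-plus-dimension-count argument you sketch is essentially the content of those references), and instead devotes its proof to the Hecke-equivariance, which you outsource. Concretely, the paper introduces the Eichler integral $\wf(z)=\int_z^{i\infty}f(t)(t-z)^\w\,dt$, notes that $\wf|_{-\w}(1-S)=P_f$ and $\wf|_{-\w}(1-T)=0$, and then reads off $P_f|\wT_n=P_{f|T_n}$ directly from \eqref{A} together with $\wf|_{-\w}T_n^\infty=\widetilde{f|T_n}$; this replaces your change-of-variables-and-boundary-terms discussion with a short algebraic computation. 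Both routes are legitimate; which you prefer depends on which half of the statement you are willing to take from the literature.

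There is, however, one genuine gap in your treatment of equivariance: the factor $M_{\w+2}$ contains the Eisenstein series, and for $f=G_{\w+2}$ the period integral $\int_0^{i\infty}f(z)(X-zY)^\w\,dz$ diverges, so neither your substitution $w=Mz$ nor the cuspidal statement of \cite{CZ} that you invoke applies to the Eisenstein component. You must check separately that $P_{G_{\w+2}}^+=\lambda_{\w+2}(X^\w-Y^\w)$ is an eigenvector of $\wT_n$ with eigenvalue $\sigma_{\w+1}(n)$. The paper does this in two lines: writing $X^\w-Y^\w=Y^\w|(S-1)$ and using $Y^\w|(1-T)=0$, property \eqref{A} gives $Y^\w|(S-1)|\wT_n=Y^\w|T_n^\infty(S-1)=\sigma_{\w+1}(n)\,Y^\w|(S-1)$. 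A smaller point of the same kind: in your injectivity step, $P_{G_k}^+\ne 0$ alone does not separate the Eisenstein line from the span of the cuspidal even periods; one needs, for instance, that $X^\w-Y^\w$ lies in the radical of the Haberland pairing against cuspidal odd periods while the cuspidal even periods pair non-degenerately. This is again standard and in the cited literature, but it is a step, not a triviality.
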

\begin{proof} 
The fact that the map is an isomorphism and that $P_{f|T_n}=P_f|\wT_n$ for 
$f\in S_{\w+2}$ and any $\wT_n\in \R_n$ satisfying~\eqref{A}  is shown 
in~\cite{CZ} or~\cite{Z}. For completeness we briefly sketch the proof
of Hecke-equivariance: The Eichler integral $\wf(z)=\int_{z}^{i\infty} 
f(t)(t-z)^\w dt$  (with $z\in \H$) has the property that 
$\wf|_{-\w}(1-S)=P_f(z,1)$ and 
$\wf|_{-\w}(1-T)=0$, so for any $\wT_n$ satisfying~\eqref{A} we have:
\[ P_f(z,1)|T_n\=P_f(z,1)|\wT_n=\wf|_{-\w} (1-S)\wT_n\=\wf|_{-\w}T_n^\infty(1-S)=P_{f|T_n}(z,1),
\]
where the last equality follows from the easily verified equality
$\wf|_{-\w} T_n^\infty= \widetilde{f|T_n}$. 

For the period polynomial $X^\w-Y^\w=Y^\w|(S-1)$
of the Eisenstein series, using~\eqref{A} and the fact that $Y^\w|(1-T)=0$  we 
obtain 
\[  Y^\w|(S-1)|T_n\= Y^\w|(S-1)\wT_n\=Y^\w|T_n^\infty (S-1)\=\sigma_{\w+1}(n) 
Y^\w|(S-1)\, , \]
where $\sigma_{\w+1}(n)=\sum_{d|n}d^{\w+1}$ is the eigenvalue of $G_{\w+2}$
under $T_n$. 
\end{proof}
The next result, proved in~\cite{Z}, is the reason for introducing 
property~\eqref{B}. For completeness we give here a different, shorter proof. 
\begin{prop}\label{P2} Let $\w>0$ and $n\ge 1$ be integers with $\w$ even. If
$\wT_n\in \R_n$ satisfies~\eqref{B}, then $\wT_n$ preserves the subspace 
$W_\w$ of $V_\w$ and satisfies
$$\tr(\wT_n, W_\w)\= \tr(\wT_n, V_\w) \,.$$
\end{prop}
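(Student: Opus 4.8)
The plan is to reinterpret the two conditions in~\eqref{B} as a statement about how $\wT_n$ permutes two subspaces of $V_\w$, after which both assertions follow almost formally. Throughout, identify $\wT_n$ with the operator $P\mapsto P|\wT_n$ in the right action, and write $\sigma=1+S$ and $\mu=1+U+U^2$ for the corresponding operators on $V_\w$; since $\w$ is even, $-I_2$ acts trivially, so $S^2=1$ and $U^3=1$ hold as operators and $W_\w=\ker\sigma\cap\ker\mu$. Expanding the products in~\eqref{B} with the rule $P|(\xi\eta)=(P|\xi)|\eta$, the first line gives $(P|\wT_n)|\sigma=(P|\mu)|\eta$ for a fixed $\eta\in\R_n$, so $P|\wT_n\in\ker\sigma$ whenever $P\in\ker\mu$; the second line symmetrically gives $P|\wT_n\in\ker\mu$ whenever $P\in\ker\sigma$. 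In other words, $\wT_n$ sends $\ker\mu$ into $\ker\sigma$ and $\ker\sigma$ into $\ker\mu$.

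Granting this kernel-swapping property, the first claim is immediate: for $P\in W_\w=\ker\sigma\cap\ker\mu$, membership in $\ker\mu$ forces $P|\wT_n\in\ker\sigma$ and membership in $\ker\sigma$ forces $P|\wT_n\in\ker\mu$, whence $P|\wT_n\in W_\w$. For the trace identity I would pass to $V_\w/W_\w$. Assuming the equality $\ker\sigma+\ker\mu=V_\w$ (addressed below), and since $\ker\sigma\cap\ker\mu=W_\w$, the second isomorphism theorem yields
\[ V_\w/W_\w\;=\;(\ker\sigma/W_\w)\oplus(\ker\mu/W_\w)\,. \]
The kernel-swapping property sends each summand into the other, so the operator induced by $\wT_n$ on $V_\w/W_\w$ has vanishing diagonal blocks and hence trace $0$. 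As $W_\w$ is $\wT_n$-stable, $\tr(\wT_n,V_\w)=\tr(\wT_n,W_\w)+\tr(\wT_n,V_\w/W_\w)=\tr(\wT_n,W_\w)$.

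The one substantial input, and the step I expect to be the main obstacle, is the identity $\ker\sigma+\ker\mu=V_\w$. Because $S$ is an involution, $\ker\sigma=\ker(1+S)=\im(1-S)$, and likewise $\ker\mu=\im(1-U)$ since $U$ has order $3$; thus $\ker\sigma+\ker\mu$ is spanned by all $P|(1-S)$ and $P|(1-U)$. A functional $\ell\in V_\w^{*}$ annihilating this span satisfies $\ell(P|S)=\ell(P)=\ell(P|U)$ for all $P$, i.e. $\ell$ is a $\G$-invariant vector for the dual action. Since $V_\w$ is self-dual as a $\G$-module, such an $\ell$ corresponds to a nonzero element of $(V_\w)^\G=H^0(\G,V_\w)$; but this space vanishes for $\w>0$, as a polynomial fixed by $T=US$ depends only on $Y$ and is then killed by $S$-invariance. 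Hence no such $\ell$ exists, the span is all of $V_\w$, and the argument is complete. Everything apart from this final vanishing is formal once~\eqref{B} is read as the kernel-swapping property.
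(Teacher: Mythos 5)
Your proposal is correct and follows essentially the same route as the paper: it reads \eqref{B} as saying that $\wT_n$ interchanges the subspaces $\ker(1+S)$ and $\ker(1+U+U^2)$ of $V_\w$, shows these two kernels span $V_\w$ by reducing to the vanishing of $V_\w^{\G}$ for $\w>0$, and concludes with a block--off-diagonal trace argument on the quotient. The paper packages the spanning step as an orthogonal-complement computation for the invariant inner product and the trace step as the abstract fact $\tr(T,A\cap B)=\tr(T,A+B)$ for $T$ interchanging $A$ and $B$, but these coincide with the computations you carry out explicitly.
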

\begin{proof}
Property~\eqref{B} implies that the operator~$\wT_n$ maps the subspaces 
$A=\ker(1+S)$ and $B=\ker(1+U+U^2)$ of $V_\w$ into each other, so 
it maps $W_\w=A\cap B$ into itself. On the other hand $A+B=V_\w$, because 
$V_\w$ is endowed with a natural, nondegenerate $\G$-invariant inner product, 
and the orthogonal complement $(A+B)^\perp$ is the $\G$-invariant subspace 
$V_\w^\G=\ker(1-S)\cap\ker(1-U)$, which is trivial for $\w>0$. The claim now follows
immediately from a simple linear algebra fact: if $T$ is a 
linear transformation of a vector space to itself, and $A,B$ are subspaces 
mapped into each other by~$T$, then $\tr(T,A\cap B)=\tr(T,A+B)$. 
\end{proof}

To state the trace formula, let 
$p_\w(t,n)$ be the Gegenbauer polynomial, defined by the power series expansion
$(1-tX+nX^2)^{-1}=\sum_{\w=0}^\infty p_\w(t,n)\,X^\w$. It satisfies 
$\tr(M,V_\w)=p_{\w}(\tr M,\det M)$ for any $M\in\GL_2(\RR)$ 
(in particular $p_\w(t,n)$ is an even function of $t$ for $\w$ even). 
\begin{cor}[The Eichler-Selberg trace formula]\label{C0}
For all $\w>0$ and $n\ge 1$ we have  
   \[
   \tr(T_n, M_{\w+2})\+ \tr (T_n, S_{\w+2})\=
   -\sum_{t \in \Z} p_\w(t,n)\; H(4n-t^2)\;.
   \]
\end{cor}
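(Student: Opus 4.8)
The plan is to assemble the three main ingredients that precede the corollary — the Eichler--Shimura isomorphism (Proposition~\ref{P1}), the trace identity of Proposition~\ref{P2}, and part~(ii) of Theorem~\ref{T0} — and then to carry out the combinatorial bookkeeping that turns a sum over conjugacy classes into the sum over integers~$t$ in the statement. Throughout I would fix one element $\wT_n\in\R_n$ satisfying \emph{both} \eqref{A} and \eqref{B} (such an element exists by Lemma~\ref{L4}); this is the only place where the simultaneous validity of the two properties is needed.

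First I would rewrite the left-hand side. By Proposition~\ref{P1} the map $(f,g)\mapsto P_f^++P_g^-$ is a Hecke-equivariant isomorphism $M_{\w+2}\oplus S_{\w+2}\simeq W_\w$, where $T_n$ acts on $W_\w$ through $\wT_n$ (this uses that $\wT_n$ satisfies~\eqref{A}). Taking traces on both sides of the isomorphism gives
\[
\tr(T_n,M_{\w+2})\+\tr(T_n,S_{\w+2})\=\tr(\wT_n,W_\w).
\]
Since $\wT_n$ also satisfies~\eqref{B}, Proposition~\ref{P2} applies and yields $\tr(\wT_n,W_\w)=\tr(\wT_n,V_\w)$. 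Thus the problem reduces to computing the trace of $\wT_n$ on the full polynomial space $V_\w$, a purely algebraic quantity no longer referring to modular forms.

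To compute $\tr(\wT_n,V_\w)$, I would write $\wT_n=\sum_{M\in\M_n}c_M\,M$ and use the stated identity $\tr(M,V_\w)=p_\w(\tr M,\det M)$ together with $\det M=n$, obtaining $\tr(\wT_n,V_\w)=\sum_M c_M\,p_\w(\tr M,n)$. The key point is that $p_\w(\tr M,n)$ is constant on each $\G$-conjugacy class $X$: conjugate matrices share a trace, and although $\tr M$ is only defined up to sign on $\M_n$, the polynomial $p_\w$ is even in its first argument for $\w$ even. Hence I can group the sum by conjugacy class and recognize the inner sum of coefficients as $\la\wT_n,X\ra$; part~(ii) of Theorem~\ref{T0} then replaces $\la\wT_n,X\ra$ by $w(X)$, giving
\[
\tr(\wT_n,V_\w)\=\sum_{X\subset\M_n} w(X)\,p_\w(\tr(X),n).
\]

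Finally I would regroup by the value of the trace. Collecting the classes with $\tr(X)=\pm t$ and inserting the class-number evaluation~\eqref{HD}, the contribution of trace $t$ is $-2H(4n-t^2)\,p_\w(t,n)$ for $t\ne0$ and $-H(4n)\,p_\w(0,n)$ for $t=0$; the hyperbolic classes are accounted for here through the values of $H$ at negative arguments, and only finitely many $t$ contribute. Because $p_\w(t,n)$ is even in $t$, these combine into the single symmetric sum $-\sum_{t\in\Z}p_\w(t,n)\,H(4n-t^2)$, which is exactly the right-hand side. I do not expect a genuine obstacle: once Theorem~\ref{T0}(ii) and~\eqref{HD} are granted the argument is linear algebra and bookkeeping. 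The only steps demanding care are verifying that Propositions~\ref{P1},~\ref{P2} and Theorem~\ref{T0}(ii) may all be invoked for one and the same~$\wT_n$, and correctly tracking the factor of~$2$ and the isolated $t=0$ term when passing between the sum over classes (naturally indexed by $|t|$) and the symmetric sum over all $t\in\Z$.
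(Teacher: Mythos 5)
Your proof is correct and follows essentially the same route as the paper's: combine Proposition~\ref{P1} and Proposition~\ref{P2} with part~(ii) of Theorem~\ref{T0} to reduce the left-hand side to $\sum_X w(X)\,p_\w(\tr(X),n)$, then regroup by trace using~\eqref{HD}. The only point the paper adds is the one-line observation that for odd $\w$ both sides vanish trivially, which you should record since the statement is asserted for all $\w>0$ while your argument assumes $\w$ even.
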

\begin{proof} For odd $\w$ both sides vanish trivially, so we assume $\w$ even. 
Let $\wT_n\in \R_n$ satisfy \eqref{A}~and~\eqref{B}. Combining the two 
propositions and part (ii) of Theorem~\ref{T0}, we obtain for even $\w\ge 2$: 
\be\label{1}
\begin{aligned}
\tr(T_n, M_{\w+2})\+ \tr (T_n, S_{\w+2})&\=\tr(\wT_n, W_\w ) 
\= \tr(\wT_n, V_\w)\\  &\=  \sum_{X\subset \M_n} \tr(M_X, V_\w)\; w(X)\;,
\end{aligned}
\ee
where the last sum is over all conjugacy classes $X$, and $M_X$ 
is any element in~$X$. 
The conclusion follows by rewriting the last sum using the property of 
$p_\w(t,n)$ above, 
together with formula~\eqref{HD}.
\end{proof}  
 Note that $\tr(T_n, M_{k})-\tr(T_n, S_k)=\sigma_{k-1}(n)$, so that 
Corollary~\ref{C0} can be rewritten as a formula for either  
$\tr(T_n, M_{k})$ or $\tr(T_n, S_k)$ ($k>2$ even), which is the 
form usually given in the literature. 

\begin{remark*}The formula in Corollary~\ref{C0} is generalized to  modular 
forms on congruence subgroup~$\G$ with Nebentypus in~\cite{P2}, 
using the same operator $\wT_n$ as in Theorem~\ref{T0} acting on the 
space of period polynomials for~$\G$. The trace on the Eisenstein subspace is also 
explicitly computed there, yielding a simple formula for the trace of a 
composition of arbitrary Hecke and Atkin-Lehner operators on cusp forms for~$\G_0(N)$. 
\end{remark*}

\section{General properties of Hecke operators}\label{S2}

In this section we analyze and generalize properties \eqref{A} and~\eqref{B} 
of the introduction for arbitrary double cosets of $\G$ in the set~$\M$ of
$2\times 2$ matrices with integer entries and positive determinant, modulo~$\{\pm 1\}$.
We let $\R=\Q[\M]$, which is a left and right module for the action of 
the group ring $\R_\G=\R_1=\Q[\G]$.

Generalizing the definition of the subspace $W_\w\subset V_\w$ of
period polynomials, we can consider for any (right)  $\R_\G$-module 
its {\it period subspace} $\ker \pi_S \cap \ker\pi_U$, where 
\[\pi_S\=\frac{1+S}2 \,,\quad \pi_U\=\frac{1+U+U^2}3  \]
are the idempotents corresponding to the generators $S$ and $U$ of $\G$ of
order~2 and~3. Equivalently, the period subspace of a (right) $\R$-module is 
the space annihilated by the right $\R_\G$-ideal
  \be \label{I}  \I \= \pi_S \R+\pi_U\R \,, \ee
and the most general subset of $\R$ which preserves 
the period subspace is 
  \[ \A \= \bigl\{ \xi\in \R \mid \xi\I\subset \I  \bigr\} 
\=\bigl\{ \xi\in \R  \mid \xi\pi_S\in \I\,,\;  \xi\pi_U\in \I  \bigr\}\,. \]
Note that elements $\wT_n$ satisfying property~\eqref{A} (and of course their 
multiples) belong to~$\A$. Indeed multiplying~\eqref{A} by $\pi_S$, $\pi_U$, 
and using the relation $(1-S)\pi_U\=(1-T^{-1})\pi_U$ and the fact that 
$T_n^\infty(1-T)\in (1-T)\R$, we obtain
$$(1-S)\wT_n\pi_S\,,\;\, (1-S)\wT_n\pi_U\;\,\in\; (1-T)\R\, ,$$ 
so $\wT_n\pi_S,\, \wT_n\pi_U\in\I$ by the characterization of $\I$ in the next 
lemma. 
 \begin{lemma}[\cite{CZ}] \label{L2} We have 
 $\; \xi\inn \I \; \Longleftrightarrow \;  (1-S)\xi \inn (1-T)\R\, .$
\end{lemma}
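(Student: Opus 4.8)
The plan is to reduce the statement to the group algebra $\Q[\G]$ and then treat the two implications separately, the forward one being a short computation and the backward one carrying all the content.

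\emph{Reduction to $\Q[\G]$.} First I would decompose $\R=\Q[\M]$ as a \emph{left} $\Q[\G]$-module: since each left orbit $\G M_0$ is free (for $\det M_0\neq 0$ the stabilizer is trivial in $\PSL_2$), $\R$ is a direct sum of copies of the regular representation, one for each left $\G$-orbit in $\M$. The operators $\pi_S$, $\pi_U$, $(1-S)$ and $(1-T)$ all act by left multiplication and hence preserve each summand, acting there exactly as on $\Q[\G]$. Consequently the right ideal $\I$, the subspace $(1-T)\R$, and the membership condition $(1-S)\xi\in(1-T)\R$ all decompose over the summands, so the lemma is equivalent to the same assertion with $\R$ replaced by $\Q[\G]$, with $\I$ now denoting $\pi_S\Q[\G]+\pi_U\Q[\G]$.

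\emph{The easy implication.} For $\xi\in\I$, write $\xi=\pi_S a+\pi_U b$. Then $(1-S)\pi_S=0$, while the relation $(1-S)\pi_U=(1-T^{-1})\pi_U$ used in the text gives $(1-S)\pi_U=(1-T)(-T^{-1}\pi_U)\in(1-T)\Q[\G]$; hence $(1-S)\xi\in(1-T)\Q[\G]$. For the converse the first move is to peel off the $S$-part: since $1-\pi_S=\tfrac12(1-S)$ we have $\xi=\pi_S\xi+\tfrac12(1-S)\xi$, and $\pi_S\xi\in\I$ automatically, so it suffices to prove the clean intersection statement
\[
(1-S)\Q[\G]\cap(1-T)\Q[\G]\ \subseteq\ \I .
\]
Because $\pi_S\Q[\G]=\{y:Sy=y\}$ and $\pi_U\Q[\G]=\{y:Uy=y\}$, a direct check shows $\I\cap(1-S)\Q[\G]=(1-S)\pi_U\Q[\G]$, so the goal sharpens to showing that every $z$ in the intersection lies in $(1-S)\pi_U\Q[\G]$.

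\emph{The backward implication and the main obstacle.} Here I would use two inputs: the identity $1-T=(1-U)+U(1-S)$ (immediate from $T=US$), which yields $\pi_U(1-T)=\pi_U(1-S)$; and the vanishing
\[
(1-S)\Q[\G]\cap(1-U)\Q[\G]\ =\ 0 ,
\]
a standard consequence of $\G$ being the free product $\Z/2\ast\Z/3$ (equivalently, the injectivity of the boundary map of the contractible Bass--Serre tree). Given $z=(1-S)u=(1-T)v$, the strategy is to subtract from $z$ a suitable element of $(1-S)\pi_U\Q[\G]$ so as to arrange $\pi_U z=0$; the resulting difference then lies in $(1-S)\Q[\G]\cap(1-U)\Q[\G]$, hence vanishes, leaving $z\in(1-S)\pi_U\Q[\G]\subseteq\I$. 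Carrying this out requires exactly that $\pi_U z=\pi_U(1-S)v$ already lie in the image $\pi_U(1-S)\pi_U\Q[\G]$. This solvability step is the heart of the matter and, I expect, the main obstacle: purely formal manipulation of the idempotents $\pi_S,\pi_U$ runs in circles, because it never confronts the fact that $T=US$ has \emph{infinite} order while $S,U$ are torsion. I anticipate needing a genuine normal-form/length induction in $\Z/2\ast\Z/3$ — concretely, tracking highest-length syllables along the axis of the hyperbolic element $T$ in the tree — to produce the required preimage and close the argument.
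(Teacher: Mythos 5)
First, a remark on the comparison you asked for: the paper does not actually prove Lemma~\ref{L2} --- it is quoted from~\cite{CZ} --- and the only related ingredient the paper does prove is the acyclicity statement $\ker(\pi_S)\cap\ker(\pi_U)=\{0\}$ of Lemma~\ref{L1}. Your reductions up to the last step are all correct: the orbit decomposition reducing to $\Q[\G]$ (harmless, though not needed), the forward implication, the reduction of the converse to $(1-S)\R\cap(1-T)\R\subseteq\I$ via $\xi=\pi_S\xi+\tfrac12(1-S)\xi$, the identification $\I\cap(1-S)\R=(1-S)\pi_U\R$, and the appeal to $(1-S)\R\cap(1-U)\R=\ker(\pi_S)\cap\ker(\pi_U)=\{0\}$. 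But the proof is not finished: the ``solvability step'' --- producing $w\in(1-S)\pi_U\R$ with $\pi_U w=\pi_U z$, i.e.\ showing $\pi_U(1-S)v\in\pi_U(1-S)\pi_U\R$ --- is where all the content sits, and you leave it as something you ``anticipate'' handling by a normal-form induction in $\Z/2*\Z/3$. As written this is a genuine gap, and the diagnosis is also off: no length induction is needed, because the step closes by a purely formal manipulation once you use \emph{both} representations of $z$ simultaneously rather than only $\pi_U z=\pi_U(1-S)v$.

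Here is the missing argument. Let $z=(1-S)u=(1-T)v$. Since $\pi_U(1-T)=\pi_U(1-S)$ (from $\pi_UU=\pi_U$ and $T=US$), applying $\pi_U$ to both expressions gives $\pi_U(1-S)(u-v)=0$; as also $\pi_S(1-S)(u-v)=0$, Lemma~\ref{L1} forces $(1-S)(u-v)=0$, so $z=(1-S)v=(1-T)v$ \emph{for the same} $v$. Subtracting, $(T-S)v=0$, hence $(1-ST)v=0$. Now $ST=SUS$ has order $3$ and $\G$ acts freely on $\M$ by left multiplication, so $\R$ is a free module over $\Q[\langle ST\rangle]$ and $\ker(1-ST)=\im\bigl(\tfrac13(1+ST+(ST)^2)\bigr)=S\pi_US\,\R$. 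Writing $v=S\pi_USw$ gives $z=(1-S)S\pi_USw=-(1-S)\pi_U(Sw)\in(1-S)\pi_U\R\subseteq\I$, which is exactly the conclusion you were after. So your architecture is sound and the obstacle you flagged is illusory, but the proof as submitted stops one essential step short.
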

 
For the purpose of proving the trace formula, we introduce 
the subset~$\B$ of $\A$ given by
  \be \label{Bb}\B\=\{ \xi \in \R \mid  \xi\pi_S\in \pi_U\R,
  \quad  \xi\pi_U\in \pi_S\R  \}\,, \ee
which contains those elements satisfying relation~\eqref{B}. 
To show that the set $\B$ contains elements satisfying~\eqref{A} as well,
we need a preliminary lemma. 
\begin{lemma}\label{L1} The action of $\pi_S$ and $\pi_U$ on $\R$ satisfies
$$\im(\pi_S)\cap\im(\pi_U)\=\ker(\pi_S)\cap\ker(\pi_U)\=\{0\}\;.$$
\end{lemma}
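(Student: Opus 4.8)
The plan is to treat $\pi_S,\pi_U$ as the right-multiplication operators on $\R=\Q[\M]$ (the convention under which $\ker\pi_S\cap\ker\pi_U$ is the period subspace of $\R$). Both are idempotents, so $\im(\pi_S)=\{\eta:\eta\pi_S=\eta\}$ and likewise for $U$, while $\ker(\pi_S),\ker(\pi_U)$ are cut out by $\xi\pi_S=0$, $\xi\pi_U=0$. The two vanishings have quite different character, so I would treat them separately. The structural input used throughout is that the right action of $\G=\langle S\rangle*\langle U\rangle$ on the basis $\M$ is \emph{free} (a matrix is invertible over $\Q$, so $M\g=M$ forces $\g=1$) with \emph{infinite} orbits.

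For the image intersection I would first record that $\eta\pi_S=\eta\iff\eta S=\eta$ and, using $U^3=1$, that $\eta\pi_U=\eta\iff\eta U=\eta$. Since $S,U$ generate $\G$, this identifies $\im(\pi_S)\cap\im(\pi_U)$ with the space $\R^\G$ of elements fixed by right multiplication by all of $\G$. Writing $\eta=\sum_M c_M M$, invariance forces the coefficient function $c$ to be constant on each right $\G$-orbit; as the orbits are infinite and $\eta$ has finite support, $\eta=0$. This disposes of the first equality with essentially no work.

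The kernel intersection is the substantive part. Expanding $\xi=\sum_M c_M M$, the two conditions $\xi\pi_S=0$ and $\xi\pi_U=0$ become the pointwise relations
$$c_{MS}=-c_M\quad\text{and}\quad c_M+c_{MU}+c_{MU^2}=0\qquad(\forall\,M\in\M).$$
I would read $c$ as a finitely supported function on the edges of the bipartite graph $G$ whose vertices are the right $\langle S\rangle$-orbits and $\langle U\rangle$-orbits in $\M$, where the edge $M$ joins $M\langle S\rangle$ to $M\langle U\rangle$; then $S$-vertices have degree $2$ and $U$-vertices degree $3$, and the two relations say exactly that the $c$-values around every vertex sum to zero. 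Thus $\ker\pi_S\cap\ker\pi_U=0$ is equivalent to: a finitely supported edge function on $G$ with all vertex-sums zero must vanish.

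Granting that $G$ is a forest, a leaf argument finishes: the support spans a finite subforest which, if nonempty, has a degree-one vertex $v$; at $v$ exactly one incident edge of $G$ carries a nonzero value (the others contribute $0$ to the vertex-sum), so the vanishing of the sum at $v$ forces that value to be $0$, a contradiction. The real point — and the step I expect to be the main obstacle — is the acyclicity of $G$. Here I would invoke that $\G=\langle S\rangle*\langle U\rangle$ is a free product: a simple cycle in $G$, read off through its alternating multipliers $S$ and $U^{\pm1}$, would produce a nontrivial reduced word in $S,U$ equal to $1$ in $\G$, which the normal form for free products forbids. (Equivalently, $G$ is a disjoint union of copies of the Bass--Serre tree of $\G$, one per right $\G$-orbit in $\M$.) With acyclicity in hand, the leaf argument yields the second equality.
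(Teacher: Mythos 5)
Your proof is correct, but the harder half takes a genuinely different route from the paper's. For $\im(\pi_S)\cap\im(\pi_U)$ the two arguments essentially coincide: an element of the intersection is fixed by $S$ and $U$, hence by all of $\G$, and a finitely supported coefficient function constant on infinite $\G$-orbits vanishes. (You use the right action and the paper the left; the two versions are equivalent under the adjoint involution $M\mapsto M^\vee$, which fixes $\pi_S$ and $\pi_U$, and the paper in fact tacitly needs both forms in its later applications.) For $\ker(\pi_S)\cap\ker(\pi_U)$ the paper merges the two relations into the single identity $c(M)=c(TM)+c(T'M)$ with $T=\sm 1101$, $T'=\sm 1011$, and gets a contradiction with finite support by iterating: products of $T$ and $T'$ have non-negative entries with strictly increasing sum, so a nonzero coefficient would propagate to infinitely many distinct matrices. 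You instead keep the two relations separate, read them as vanishing vertex-sums on the Bass--Serre graph of $\la S\ra*\la U\ra$ acting freely on $\M$, reduce everything to acyclicity of that graph, and finish with a leaf argument on the finite support. Your reduction and the leaf step are sound, and the acyclicity is indeed the normal-form theorem for the free product $\PSL_2(\Z)\cong \Z/2*\Z/3$. What your version buys is conceptual transparency and generality (it works verbatim for any free action of a free product of finite groups); what it gives up is self-containedness, since the free-product normal form is exactly the nontrivial input that the paper's positivity trick with $T,T'$ establishes from scratch (that trick is essentially a ping-pong argument proving the same acyclicity). Both proofs are valid.
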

\begin{proof}
The first statement is clear since any element in $\pi_S\R\cap\pi_U\R $ is left
invariant under both $S$ and $U$, hence is $\G$-invariant, and hence equal to 0. 
The second property, called ``acyclicity'' in~\cite{CZ}, was proved 
there using the action of $\R$ on rational period functions. For completeness
we give a shorter, more direct proof here.

Assume that $\xi\in \ker(\pi_S)\cap\ker(\pi_U)$, so in particular
$(1+S)\xi=(1+U+U^2)\xi$. Setting $T'=U^2S=\sm 1011$, and recalling that  $T=US$ 
we obtain $\xi=(T^{-1}+T'^{-1})\xi$. Letting $\xi=\sum c(M)M$, it follows 
that $c(M)= c(TM)+ c(T' M)$ for all $M\in \M$, and this immediately 
leads to a contradiction if $c(M)\ne0$: we would get $c(\g_iM)\ne0$ 
for an infinite sequence of elements $\g_i\in\G$ with $\g_0=1$ and 
$\g_{i+1}=T\g_i$ or $T'\g_i$, and this is impossible since the 
$\g_i$ are distinct (they have non-negative coefficients whose 
sum increases strictly), and hence the $\g_iM$ are also all distinct. Therefore 
$\xi=0$. 
\end{proof}

\begin{lemma}\label{L4} 
For $\xi\in\A$, there are unique elements $\xi_S\in \pi_S \R$, $\xi_U\in \pi_U 
\R$ such that 
\[ \xi \pi_S\m \xi_S\inn \pi_U \R\,,
\quad \xi \pi_U\m \xi_U\inn \pi_S \R\,.\]
The map $P:\A\rightarrow \A$ given by $\,P(\xi)\, :=\, \xi \m \xi_S \m 
\xi_U\, $
is a projection onto $\B$, and the image of the ideal $\I$ under 
this map is the set $\J\subset \I$ given by
\begin{equation}\label{9}
 \J \= \pi_S\R(1-\pi_S)\+\pi_U\R(1-\pi_U)\,.
\end{equation}
\end{lemma}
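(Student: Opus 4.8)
The plan is to construct the elements $\xi_S, \xi_U$ by hand, then verify uniqueness, projection, and the image formula in turn. First I would establish existence and uniqueness of $\xi_S \in \pi_S\R$ and $\xi_U \in \pi_U\R$. By definition of $\A$, we have $\xi\pi_S \in \I = \pi_S\R + \pi_U\R$, so we may write $\xi\pi_S = \eta_S + \zeta_S$ with $\eta_S \in \pi_S\R$ and $\zeta_S \in \pi_U\R$. The natural candidate is $\xi_S = \eta_S$, so that $\xi\pi_S - \xi_S = \zeta_S \in \pi_U\R$ as required; symmetrically $\xi_U = \eta_U$ where $\xi\pi_U = \eta_U + \zeta_U$ with $\eta_U \in \pi_S\R$, $\zeta_U \in \pi_U\R$. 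For uniqueness, suppose $\xi_S, \xi_S' \in \pi_S\R$ both satisfy $\xi\pi_S - \xi_S,\, \xi\pi_S - \xi_S' \in \pi_U\R$. Subtracting, $\xi_S - \xi_S' \in \pi_S\R \cap \pi_U\R = \{0\}$ by the first part of Lemma~\ref{L1}, giving $\xi_S = \xi_S'$; the argument for $\xi_U$ is identical. This is where the acyclicity lemma does its essential work.

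Next I would check that $P(\xi) = \xi - \xi_S - \xi_U$ lands in $\B$ and that $P$ maps $\A$ to $\A$. For the first, I compute $P(\xi)\pi_S = \xi\pi_S - \xi_S\pi_S - \xi_U\pi_S$. Since $\xi_S \in \pi_S\R$ we have $\xi_S\pi_S = \xi_S$ (using $\pi_S^2 = \pi_S$), and $\xi_U\pi_S \in \pi_U\R\,\pi_S \subset \I$; combined with $\xi\pi_S - \xi_S \in \pi_U\R$, this should place $P(\xi)\pi_S$ in $\pi_U\R$, as demanded by~\eqref{Bb}. The subtle point is handling the cross term $\xi_U\pi_S$: I expect it to be reabsorbed into the $\pi_U\R$ part, possibly after a short computation with the idempotent relations. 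The symmetric computation gives $P(\xi)\pi_U \in \pi_S\R$, so $P(\xi) \in \B$. Since $\B \subset \A$ and elements of $\pi_S\R, \pi_U\R$ lie in $\I \subset \A$, the map $P$ indeed sends $\A$ into $\A$.

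For the projection property I would show $P$ restricts to the identity on $\B$ and that $P^2 = P$. If $\xi \in \B$, then $\xi\pi_S \in \pi_U\R$ already, so the decomposition $\xi\pi_S = \eta_S + \zeta_S$ forces $\eta_S = \xi_S \in \pi_S\R \cap \pi_U\R = \{0\}$ by uniqueness and Lemma~\ref{L1}; likewise $\xi_U = 0$, whence $P(\xi) = \xi$. Idempotence $P^2 = P$ then follows since $P(\A) \subset \B$ and $P|_\B = \mathrm{id}$.

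Finally, I would identify $P(\I) = \J$. The inclusion $P(\I) \subset \I$ holds because $\B \subset \A$ preserves $\I$, or more directly because each term defining $P(\xi)$ lies in $\I$. To obtain the explicit form~\eqref{9}, I would compute $P$ on the two generating families $\pi_S\R$ and $\pi_U\R$ of $\I$. For a generator $\pi_S\rho \in \pi_S\R$, its product with $\pi_S$ is $\pi_S\rho\pi_S$, and I expect the decomposition to yield $(\pi_S\rho)_S = \pi_S\rho\pi_S$ and a correction term producing exactly $\pi_S\rho(1 - \pi_S)$; symmetrically for $\pi_U\R$. Summing over generators recovers $\J = \pi_S\R(1-\pi_S) + \pi_U\R(1-\pi_U)$. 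The main obstacle throughout is bookkeeping the idempotent cross-terms $\pi_S\pi_U$ and $\pi_U\pi_S$, which do not vanish; the key leverage is repeatedly invoking $\pi_S\R \cap \pi_U\R = \{0\}$ from Lemma~\ref{L1} to pin down the decompositions uniquely.
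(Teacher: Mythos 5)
Your overall strategy matches the paper's, but one step fails as justified: the claim that $\xi_S\pi_S=\xi_S$ ``since $\xi_S\in\pi_S\R$, using $\pi_S^2=\pi_S$''. Membership in the \emph{left} ideal $\pi_S\R$ only gives $\pi_S\xi_S=\xi_S$; right multiplication by $\pi_S$ need not fix $\xi_S$ (for $\xi_S=\pi_S M$ with $M$ a single matrix, $\pi_S M\pi_S\ne\pi_S M$ in general). As written, your computation leaves $P(\xi)\pi_S=\xi_S(1-\pi_S)+(\xi\pi_S-\xi_S)-\xi_U\pi_S$ with the term $\xi_S(1-\pi_S)$ lying in $\pi_S\R$, not visibly in $\pi_U\R$. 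The identity $\xi_S\pi_S=\xi_S$ does happen to hold here, but only because $\xi_S(1-\pi_S)=-(\xi\pi_S-\xi_S)(1-\pi_S)$ (using $\xi\pi_S(1-\pi_S)=0$) lies in $\pi_S\R\cap\pi_U\R=\{0\}$ --- i.e.\ it needs Lemma~\ref{L1} again, not just idempotency. The paper sidesteps this entirely by writing $P(\xi)\pi_S=(\xi\pi_S-\xi_S-\xi_U)\pi_S$, the whole bracket lying in $\pi_U\R$, which is stable under right multiplication by $\pi_S$. Incidentally, the term you flag as subtle, $\xi_U\pi_S$, is the harmless one: $\pi_U\R\pi_S\subset\pi_U\R$ immediately. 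There is also a labeling slip in your existence paragraph: you set $\xi_U=\eta_U\in\pi_S\R$, whereas the lemma requires $\xi_U\in\pi_U\R$; you want the other component, $\xi_U=\zeta_U$, so that $\xi\pi_U-\xi_U=\eta_U\in\pi_S\R$ (your later computations use the correct convention).

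The rest is sound, and your treatment of $P(\I)=\J$ is a genuinely different and slightly cleaner route than the paper's: you compute $P$ on the generating subspaces, getting $P(\pi_S\rho)=\pi_S\rho(1-\pi_S)$ (since $(\pi_S\rho)_S=\pi_S\rho\pi_S$ and $(\pi_S\rho)_U=0$) and symmetrically on $\pi_U\R$, then use linearity of $P$ (which follows from the uniqueness of $\xi_S,\xi_U$) to conclude $P(\I)=\pi_S\R(1-\pi_S)+\pi_U\R(1-\pi_U)$. The paper instead shows $P(\I)=\I\cap\B$ and proves the harder inclusion $\I\cap\B\subset\J$ by a separate application of Lemma~\ref{L1} together with $\ker\pi_S=\im(1-\pi_S)$; that route has the side benefit of identifying $\J$ with $\I\cap\B$, which is reused after Theorem~\ref{T2}, but for the lemma itself your version suffices.
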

Note that the projection $P:\A\rightarrow \B$ defined in the lemma satisfies
 $P(\xi)-\xi\in\I$.  This proves the existence of elements satisfying both~\eqref{A} 
 and~\eqref{B}, since any element~$\wT_n\in\R_n$ satisfying~\eqref{A} belongs to~$\A$ 
 as we saw above, and  $P(\wT_n)\in \wT_n+\I$ satisfies both~\eqref{A} and~\eqref{B}.
\begin{example*} 
We have $1\in \A$, and $1_S=\pi_S$, $1_U=\pi_U$, so $P(1)=1-\pi_S-\pi_U\in\B $.
\end{example*} 
\begin{proof} 
The existence of $\xi_S$, $\xi_U$ follows from the definition of $\A$, 
and their uniqueness from the first part of Lemma~\ref{L1}. The element
$P(\xi)$ belongs to $\B$ because $P(\xi)\pi_S=(\xi\pi_S-\xi_S-\xi_U)\pi_S\in 
\pi_U\R$ (using $\pi_S=\pi_S^2$), and similarly for  $P(\xi)\pi_U$. 
If $\xi\in \B$, the uniqueness of $\xi_S$, $\xi_U$ shows that $\xi_S=\xi_U=0$, 
so $P(\xi)=\xi$.

We have $P(\I)=\I\cap \B$, which clearly contains $\J$. To show the 
other implication, let $\xi\in\I\cap \B$, that is, $\xi=\pi_S X+\pi_U Y $, 
with $\pi_S X \pi_S\in \pi_U \R$ and $\pi_U Y \pi_U\in \pi_S \R$. 
By the first part of Lemma~\ref{L1} we get $\pi_S X \pi_S=0$, and since 
$\ker(\pi_S)=\im(1-\pi_S)$, we have  $\pi_S X\in\R (1-\pi_S)$. Multiplying on 
the left by $\pi_S=\pi_S^2$, we obtain $\pi_S X\in \pi_S \R(1-\pi_S)$. 
Similarly one shows 
$\pi_U Y\in \pi_U \R(1-\pi_U)$, so $\xi\in \J$.
\end{proof}

We now come to the main results of this section, for which we need to consider
elements $\wT_\DD$ satisfying analogues of~\eqref{A} and~\eqref{B}, defined for 
any $\DD$ which is a double coset $\G\dd\G$ ($\dd\in \M$), or more 
generally a left- and right-$\G$-invariant subset of $\M_n$ (which is 
a finite union of double cosets).
 
We set $\RD=\Q[\DD]$, $\AD=\A\cap \RD$, $\BD=\B\cap \RD$.
As in the case $\DD=\M_n$, we define 
$T_\DD^\infty=\sum_{M\in \M_\DD^\infty} M \in \R_\DD,$
where $\M_\DD^\infty$ is a set of representatives for $\G\bsh \DD$ 
fixing~$\infty$. The set $\DD$ gives rise to a Hecke 
operator $T_\DD$ acting on modular forms $f\in M_k$ by
$f|T_\DD=n^{k-1} \sum_{M\in \G\bsh\DD} f|_k M$
 (for $\DD\subset \M_n$). 
One can show as in the case $\DD=\M_n$ that the corresponding action on
period polynomials is by any element $\wT_\DD$ satisfying 
  \be\tag{A$'$}\label{Aa} (1-S)\wT_\DD \;\equiv\;  T_\DD^\infty(1-S) 
\pmod{ (1-T)\R_\DD}\; , \ee
and the existence of such elements follows as in~\cite{CZ}.
The same proof given above for $\wT_n$ shows that any such $\wT_\DD$ belongs to 
$\A_\DD$. The next theorem shows that for a single double coset the 
converse is also true up to scaling. 

\begin{thm}\label{T1} Let $\DD$ be a double coset. Then any 
element $\T\in\AD$ satisfies
\be \label{90}
(1-S)\T \;\equiv\; \a  \cdot T_\DD^\infty(1-S) 
\pmod{ (1-T)\R_\DD}
\ee
for a unique number $\a=\a(\T)\in\Q$.  
\end{thm}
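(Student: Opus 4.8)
The plan is to recast the congruence modulo $(1-T)\RD$ as a statement about a single linear map, to settle uniqueness by producing one distinguishing functional, and then to reduce existence to a one-dimensionality statement that carries the real content.

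First I would justify working entirely inside $\RD$. Since $T\in\G$ preserves $\DD$ and acts on it with only infinite orbits ($T^kM=M$ forces $T^k=1$ in $\G$), a finitely supported element $(1-T)\eta$ can lie in $\RD$ only if $\eta$ does, so $(1-T)\R\cap\RD=(1-T)\RD$. Combined with Lemma~\ref{L2}, this yields the $\DD$-analogue: for $\xi\in\RD$ one has $\xi\in\ID\iff(1-S)\xi\in(1-T)\RD$. Hence the map $\Psi\colon\RD\to Q:=\RD/(1-T)\RD$, $\Psi(\xi)=\ov{(1-S)\xi}$, has kernel exactly $\ID$ and identifies $\RD/\ID$ with its image in $Q$. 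Because $\I$ is a right ideal we have $\I\I\subset\I$, so $\I\subset\A$ and thus $\ID\subset\AD$, giving $\Psi(\AD)\cong\AD/\ID$. Now \eqref{90} asserts precisely that $\Psi(\AD)$ lies on the line spanned by $\Psi(\wT_\DD)=\ov{T_\DD^\infty(1-S)}$, where $\wT_\DD$ is any element satisfying \eqref{Aa} (such elements exist and lie in $\AD$). The theorem is therefore equivalent to $\dim_\Q\AD/\ID\le1$ together with $\ov{T_\DD^\infty(1-S)}\ne0$.

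For uniqueness I would use the functional $\ell\colon\RD\to\Q$ sending $M$ to $1$ if $M\infty=\infty$ (that is, $M$ upper triangular) and to $0$ otherwise. As $T$ fixes $\infty$, the condition $M\infty=\infty$ is invariant under $M\mapsto TM$, so $\ell$ vanishes on $(1-T)\RD$ and descends to $Q$. On the upper-triangular representatives $M$ of $\G\bsh\DD$ one has $\ell(M)=1$ but $\ell(MS)=0$ (since $MS\infty=M\cdot0\ne\infty$), whence $\ell\big(T_\DD^\infty(1-S)\big)=|\G\bsh\DD|\ne0$. This shows $\ov{T_\DD^\infty(1-S)}\ne0$ and forces $\a(\T)=\ell((1-S)\T)/|\G\bsh\DD|$, giving uniqueness and, once existence is known, an explicit formula for $\a$.

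The heart of the matter, and the step I expect to be hardest, is the bound $\dim_\Q\AD/\ID\le1$. Here I would exploit the left $\G$-module structure: writing $\DD=\bigsqcup_{i=1}^h\G m_i$ with $m_i$ fixing $\infty$, each orbit is free, so $\RD=\bigoplus_i\Q[\G]m_i$, and since left multiplication by $T$ preserves each summand, $\Psi$ respects the decomposition and $\ID=\bigoplus_i\I_\G m_i$ with $\I_\G=\{\eta\in\Q[\G]\mid(1-S)\eta\in(1-T)\Q[\G]\}$. Expanding $\T=\sum_i\eta_i m_i$ and encoding the right $\G$-action on $\G\bsh\DD$ by relations $m_iS=\g_i m_{s(i)}$, $m_iU=\dots$, the membership conditions $\T\pi_S,\T\pi_U\in\ID$ turn into a finite linear system for the classes $\ov{\eta_i}$ in the right $\G$-module $H:=\Q[\G]/\I_\G$, coupling the indices through the permutations attached to $S$ and $U$. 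It then remains to show that this system has a one-dimensional solution space, and this is where the special geometry of $\PSL_2(\Z)$ must enter: $H$ is the module of degree-zero divisors on the single cusp orbit $\PP^1(\Q)$ (via $\eta\mapsto\ov{(1-S)\eta}$, with $S\infty=0$), and the two permutations together with the transitivity of $\G$ on $\DD$ should pin the solution down to the one ``constant'' direction detected by $\ell$. Dually, a $T$-invariant functional annihilates $\im\Psi$ exactly when it is left $\G$-invariant (because $\langle S,T\rangle=\G$), so $\coker\Psi\cong\Q[\G\bsh\DD]$; the crux is that after imposing the $\pi_S$- and $\pi_U$-conditions only the cusp functional $\ell$ survives.
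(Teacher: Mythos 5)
Your reduction of the theorem to the two statements ``$T_\DD^\infty(1-S)\not\equiv 0 \pmod{(1-T)\RD}$'' and ``$\dim_\Q \AD/\ID\le 1$'' is correct, and the preliminary points are sound: $(1-T)\R\cap\RD=(1-T)\RD$ holds because left multiplication by $T$ is a free action preserving $\DD$ and its complement, and $\ID\subset\AD$ because $\I$ is a right ideal. Your uniqueness argument via the functional $\ell$ (counting upper-triangular matrices) is a valid, slightly coarser variant of the paper's, which instead invokes the criterion~\eqref{T} that $\zeta\in(1-T)\R$ iff $\la\zeta,K\ra=0$ for every $\G_\infty$-orbit $K$.

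The problem is that the second statement, which you yourself flag as carrying the real content, is never proved. Transitivity of $\G$ on $\G\bsh\DD$ only shows that $\ov{\eta_1}$ determines all the other classes $\ov{\eta_i}$ in $H=\Q[\G]/\I_\G$; since $H$ is infinite-dimensional, $\ov{\eta_1}$ still ranges a priori over a subspace cut out by consistency relations (coming from the stabilizer of a coset under the $S$- and $U$-permutations) that you do not analyze, and the assertion that these ``should pin the solution down'' to the constant direction is exactly the claim to be established. The paper's proof supplies the missing step as a vanishing argument: setting $\xi=(1-S)\T$ and $a(K)=\la\xi,K\ra$ for $K\in\G_\infty\bsh\DD$, the two conditions $(1-S)\T\pi_S,\,(1-S)\T\pi_U\in(1-T)\R$ translate via~\eqref{T} into $a(K)+a(KS)=0$ and $a(K)+a(KU)+a(KU^2)=0$, hence $a(K)=a(KT)+a(KT')$ with $T'=U^2S$; since all matrices in $K$ share the same second row, the infinite-descent/positivity argument from the proof of Lemma~\ref{L1} forces $a(K)=0$ unless $K\subset\M_\infty$ or $K\subset\M_\infty S$. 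Only after this finiteness step do the relations $\wa(K)=\wa(KT)=\wa(KS)$ and transitivity produce a single constant $\a$. Until you prove an analogue of this step (equivalently, your claim that after imposing the $\pi_S$- and $\pi_U$-conditions only the cusp functional survives), the proposal does not establish the theorem.
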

Equation~\eqref{90} says that $\T\equiv \a \wT_\DD \pmod{\I}$ for any 
$\wT_\DD$ satisfying~\eqref{Aa}. The theorem can be reformulated as the 
existence of a linear map $\a:\AD\rightarrow \Q$ and of an exact sequence
\[0\xrightarrow{\phantom{a\a a}}\ID \xrightarrow{\phantom{a\a a}}\AD 
\xrightarrow{\phantom{a}\a\phantom{a}} \Q \xrightarrow{\phantom{a\a a}} 0
\,,\]
where $\a$ is uniquely determined by the normalization  $\a(\wT_\DD)=1$ for 
any element~$\wT_\DD$ satisfying~\eqref{Aa}.
\begin{proof} Suppose $\T\in \A$ (we will assume $\T\in\AD$ 
only at the end of the proof). By Lemma~\ref{L2}, we get
  \be\label{4} (1-S)\T\pi_S\;\in\; (1-T)\R\,,\quad (1-S)\T\pi_U\;\in\; 
(1-T)\R\,.\ee

 Let $\G_\infty$, $\M_\infty$, and $\DD_\infty$ be the subsets fixing $\infty$ 
of $\G$, $\M$, and $\DD$, respectively, and for $\G_\infty$-orbits  
$K\in \G_\infty\bsh\M$, we denote by $M_K\in K$ a fixed representative. 

For any $\zeta\in \R$ we have  
$\zeta-\sum_{K\in \G_\infty\bsh\M}\la \zeta, K\ra  M_K\in (1-T)\R$, so      
  \be \label{T} \zeta \in (1-T)\R\  \Longleftrightarrow\ \la \zeta,K\ra =0\,,  
  \text{ for all } K\in \G_\infty\bsh\M  \,, \ee 
where the notation $\la\cdot, \cdot \ra$ was defined in the introduction. 

Set $\xi=(1-S)\T$, and define the function $a=a_\xi:\G_\infty\bsh 
\M\rightarrow \Q$ by  
$a(K)=\la \xi,K\ra$, so $a(K)$ is nonzero for finitely many $K$. 
From~\eqref{4} and~\eqref{T} we obtain
  $$a(K)\+a(KS)=0\,,\quad a(K)\+a(KU)\+a(KU^2)\=0\,.$$
It follows that $a(K)=a(KT)+a(KT')$. Since elements in $K$ share the same 
second row, we conclude as in the proof of Lemma~\ref{L1} that 
 $a(K)=0$ unless one of the elements on the second row 
of matrices in $K$ equals  0,
that is, unless $K\subset\M_\infty$ or $K\subset\M_\infty S$.
For $K\subset\M_\infty$ it follows that $a(KT')=0$, so $a(K)=a(KT)$ 
for $K\in\G_\infty\bsh\M_\infty$. 

The set of orbits  $\G\bsh\M$ can be identified with  
$\G_\infty\bsh\M_\infty$ by chosing for each $K\in \G \bsh \M$ 
a representative $M_K\in \M_\infty$.  With this identification, 
the function $a$ gives rise to a function $\wa$ on $ \G\bsh\M$, by 
$\wa(K)=a(\G_\infty M_K)$, and we have shown that $\wa(K)=\wa(KT)$ and 
  \be \label{6} \xi \equiv \sum_{K\in\G\bsh\M} 
\wa(K)M_K(1-S) \pmod{(1-T)\R}\,.\ee 
 Recalling that 
$\xi=(1-S)\T$, we have $\la \xi,K\ra=0$ for all $K\in \G\bsh\M$. Therefore
$\wa(K)=\wa(KS)$. 

We now assume that $\T\in\AD$, so the sum in \eqref{6} is over 
$K\in\G\bsh\DD$. Since~$\DD$ 
is a double coset, the group~$\G$  acts transitively on $\G\bsh\DD$ on the 
right. Since $S$ and $T$ generate $\G$, from $\wa(K)=\wa(KS)=\wa(KT)$
it follows that $\wa(K)=\a$ is constant for $K\in \G\bsh\DD$.
We conclude from~\eqref{6} that $\T$ satisfies~\eqref{90}, and then
Lemma~\ref{L2} shows that $\T\equiv \a\wT_\DD\pmod{\I}$ for any $\wT_\DD$ 
satisfying~\eqref{Aa}.

For the uniqueness of $\a$, note from Lemma~\ref{L2} that any two 
elements $\wT_\DD$ satisfying~\eqref{Aa} differ by an element in $\I_\DD$, so it 
is enough to show that $\wT_\DD\not\in\I_\DD$. But   
$\wT_\DD\in\I_\DD$ would imply that   $T_\DD^\infty (1-S)\in (1-T)\R_\DD$, 
which is easily seen to contradict~\eqref{T}.
\end{proof}
\begin{remark*}
An equivalent formulation of condition \eqref{Aa} is due to L. Merel (for the
case $\DD=\M_n$), and it was used as the definition of Hecke operators acting 
on modular symbols in~\cite{Me}. For a matrix $M=\sm abcd$, we denote its 
adjoint by $M^\vee=\sm d{-b}{-c}a$, and we extend the notation to elements 
of~$\R$ by linearity. Then the operator $\T\in \RD$ satisfies $\eqref{Aa}$ if 
and only if its adjoint $\T^\vee$ satisfies Merel's condition:
  \[ \T^\vee_K ([0]-[\infty]) \= [0]-[\infty]\,, 
   \quad \forall K \in \DD/\G\,,\]
where $\T^\vee_K=\sum_{M\in K }c_M M$ if  $\T^\vee=\sum c_M M$. Here $\M$,
and by linearity $\R$, act on the set of divisors 
supported on the cusps by fractional linear transformations. 

Indeed, after taking adjoints condition \eqref{Aa} is equivalent with 
  \[ \T^{\vee}_K(1-S)\m (M_K-S M_{SK} ) \inn M_K\cdot \R_\G 
(1-T)\quad 
  \text{for all $K\in \DD/ \Gamma$ } \;,\]
where we denote by $M_K$ the unique representative of the 
coset $K$ which is in $(\M_\DD^\infty)^\vee$. 
The previous relation is equivalent to Merel's condition by the following 
immediate consequence of the adjoint of equivalence~\eqref{T}: 
for $\xi\in \R_\G$ we have $\xi[\infty]=0$ if and only if $\xi\in  \R_\G(1-T).$ 
\end{remark*}

We now study more closely elements in the subspace $\BD=\B\cap 
\Q[\DD]$ of the vector space~$\B$. 

\begin{thm} \label{T2} Let $\DD\subset \M$ be a double coset and let 
$\T\in\BD\subset \AD$.

\emph{(a)} There exists $\b(\T)\in\Q$ such that 
$\la \T,K \ra=\b(\T)$ for all~$K\in \DD/\G$.

\emph{(b)} We have $\b(\T)=-\a(\T)$, with $\a(\T)$ 
defined in Theorem~\ref{T1}.  

\noindent Moreover, the map $\b:\BD\rar\Q$ is surjective.  
\end{thm}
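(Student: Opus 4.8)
The plan is to treat the three assertions in turn, building everything on the two defining relations $\T(1+S)=2\T\pi_S\in\pi_U\R$ and $\T(1+U+U^2)=3\T\pi_U\in\pi_S\R$ that characterise $\BD$.

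For part (a) I set $f(K)=\la\T,K\ra$ for $K\in\DD/\G$ and show that $f$ is left-$\G$-invariant. Since right multiplication by any $g\in\G$ only permutes the matrices inside a fixed right coset, $\la\T g,K\ra=\la\T,K\ra$; hence $\la\T(1+S),K\ra=2f(K)$ and $\la\T(1+U+U^2),K\ra=3f(K)$. The membership $\T(1+S)\in\pi_U\R$ means that $\T(1+S)$ is fixed by left multiplication by $U$, and likewise $\T(1+U+U^2)$ is fixed by left multiplication by $S$; combined with the identity $\la g\xi,K\ra=\la\xi,g^{-1}K\ra$ this forces $f(UK)=f(K)$ and $f(SK)=f(K)$. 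As $S$ and $U$ generate $\G$, the function $f$ is constant on each left $\G$-orbit in $\DD/\G$. Finally, because $\DD$ is a single double coset every right coset has the form $\g\dd\G$, so the left action of $\G$ on $\DD/\G$ is transitive and $f$ is globally constant; this constant is $\b(\T)$, proving (a).

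For part (b) I would first reduce $\b=-\a$ to a single numerical normalisation. The functional $\b$ is linear, and it vanishes on $\JD=\ID\cap\BD$: by part (a) one has $\epsilon(\iota)=\la\iota,\DD\ra=|\DD/\G|\,\b(\iota)$ for the augmentation $\epsilon$, while $\epsilon$ kills each generator $\pi_S\eta(1-\pi_S)$, $\pi_U\eta(1-\pi_U)$ of $\J$ because $\epsilon(\pi_S)=\epsilon(\pi_U)=1$. By Theorem~\ref{T1}, $\a\colon\BD\to\Q$ is surjective with kernel $\ID\cap\BD=\JD$, so $\b=c\,\a$ for a unique $c\in\Q$, and it remains to prove $c=-1$. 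Here I use two computations. Pairing relation~\eqref{90} with the $\G_\infty$-orbit $\G_\infty M_1$ of each upper-triangular representative $M_1\in\M_\DD^\infty$ gives $\la(1-S)\T,\G_\infty M_1\ra=\a(\T)$ (pairing with a $\G_\infty$-orbit kills $(1-T)\R$ by~\eqref{T}, and the right-hand side contributes $1$, since $M_1$ is the only element of $\M_\DD^\infty$ in $\G_\infty M_1$ and none lies in $\G_\infty M_1S$); summing over the $|\G\bsh\DD|$ representatives yields $A(\T)-B(\T)=|\G\bsh\DD|\,\a(\T)$, where $A(\T)=\la\T,\M_\infty\cap\DD\ra$ and $B(\T)=\la\T,S\M_\infty\cap\DD\ra$. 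On the other hand part (a) gives $\epsilon(\T)=|\DD/\G|\,\b(\T)$, and $|\DD/\G|=|\G\bsh\DD|$ by the standard adjoint bijection. Thus $c=-1$ is equivalent to the identity $\epsilon(\T)=B(\T)-A(\T)$ for all $\T\in\BD$.

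I expect this last identity to be the main obstacle. It does not follow from the two $\B$-relations alone: reformulating them in terms of the cusp-sum functions $N_s(t)=\la\T,\{M\in\DD:Ms=t\}\ra$, they assert only that $N_\infty+N_0$ is invariant under $t\mapsto Ut$ and $N_\infty+N_0+N_1$ under $t\mapsto St$, which constrain these combinations but not $N_\infty$ (hence $\epsilon$, $A$, $B$) individually. The argument must therefore also exploit the single-double-coset structure through the transitivity established in part (a), by analysing the weights that $\M_\infty\cap\DD$ and $S\M_\infty\cap\DD$ distribute among the individual right cosets. The identity is readily checked on the explicit elements of the introduction, where $\epsilon(\wT_1)=-1$ (one coset) and $\epsilon(\wT_2)=-3$ (three cosets), both giving $c=-1$. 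Once $c=-1$ is known, $\b=-\a$ follows at once, and surjectivity of $\b$ is immediate since $c\neq0$ and $\a$ is surjective.
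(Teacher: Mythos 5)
Your part (a) is correct and coincides with the paper's argument: left $U$\nobreakdash-invariance of $\T(1+S)$ and left $S$\nobreakdash-invariance of $\T(1+U+U^2)$ give $\la \T,K\ra=\la\T,UK\ra=\la\T,SK\ra$, and transitivity of the left $\G$-action on $\DD/\G$ does the rest. Your setup for part (b) is also sound, and the augmentation argument showing $\b(\JD)=0$ is a legitimate (and slightly slicker) substitute for the paper's structural proof that $\BD\cap\ker\b=\JD$. Two small points: the surjectivity of $\a|_{\BD}$, which you need both to write $\b=c\,\a$ with $c$ unique and to deduce surjectivity of $\b$ at the end, does not follow from Theorem~\ref{T1} alone but from the projection $P$ of Lemma~\ref{L4} (or from an explicit construction of an element of $\BD$ with $\a\ne0$, which is what the paper does); and the identity $\ID\cap\BD=\JD$ is likewise Lemma~\ref{L4}, not Theorem~\ref{T1}. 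Your computation $A(\T)-B(\T)=|\G\bsh\DD|\,\a(\T)$ by pairing \eqref{90} against the $\G_\infty$-orbits of the representatives in $\M_\DD^\infty$ is correct.

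The genuine gap is exactly the step you flag yourself: the evaluation $c=-1$, equivalently $\epsilon(\T)=B(\T)-A(\T)$, for an \emph{arbitrary} double coset. Checking it on $\wT_1$ and $\wT_2$ only settles $\DD=\M_1$ and $\DD=\M_2$, whereas the theorem must hold for every $\DD=\G\sm a00{am}\G$, and (as you correctly observe) the identity is not a formal consequence of the two $\B$-relations. This is where the real work of the paper's proof lies: after reducing to $\DD=\G\sm 100n\G$, it builds a concrete $\T=A+B(1-S)=A'+B'(1-U)\in\BD$ with $A=(1+U+U^2)\sm 100n+\cdots$ and $A'=(1+S)\sm 100n+\cdots$ concentrated on the cosets $K_0=\sm 100n\G$, $SK_0$, $UK_0$, computes $\T_{K_0}-S\T_{SK_0}$ explicitly using the identity $1-T^{-n}=(1+T^{-1}+\cdots+T^{-n+1})\bigl[(1-S)+S(1-U^2)\bigr]$, and compares with the $K_0$-part of \eqref{Aa} to get $(1+\a(\T))\sum_{b\bmod n}\sm1b0n(1-S)\in(1-T)\R$, hence $\a(\T)=-1$. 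Some computation of this kind, valid for all $n$, is indispensable; as written, your part (b) is a correct reduction followed by an acknowledged but unfilled hole, so the proof is incomplete.
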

The theorem  implies that the exact sequence stated after 
Theorem~\ref{T1} can be a completed to a commutative diagram with exact rows:
\[
\begin{tikzcd}
 0\arrow{r} &\ID\arrow{r} &\AD\arrow{r}{\a} &\arrow{r}\Q\arrow[equals]{d}  &0 \\
 0\arrow{r} &\JD\arrow{r}\arrow[hookrightarrow]{u}  
            &\BD\arrow{r}{-\b}\arrow[hookrightarrow]{u} &\Q\arrow{r} & 0  
\end{tikzcd}
\]
where $\J$, which was defined in~\eqref{9}, was already shown to equal 
$\I\cap\B$ in Lemma~\ref{L4}. 
%The surjectivity of $\b$ (proved directly below) 
%follows from the fact that $\a=-\b\circ P$, using Lemma~\ref{L4} and 
%part~(b).

\begin{proof} (a) For $\T\in\RD$, denote by $\T_K$ the part of 
$\T$ supported on a coset $K\in \DD/\Gamma$. For $\T\in\BD$, we have
$(1-U)\T\pi_S=(1-S)\T \pi_U=0$, hence
$$\T_K\pi_S\=U\T_{U^2 K}\pi_S\,,\quad \T_{K}\pi_U\=S\T_{S K}\pi_U\,,$$
for all cosets $K\in \DD/\Gamma$. It follows that 
$\la \T, K \ra =\la  \T,UK \ra=\la \T, SK \ra$ 
for all cosets~$K$. Since~$U$, $S$ generate $\Gamma$, which acts transitively 
on the cosets $\DD/\Gamma$, we obtain that $\la \T, K\ra$ is the same for 
all cosets~$K$.\vspace{2mm}

(b) To show $\bb=-\aa$, and to prove surjectivity of $\beta$, we give
a direct construction of elements $\T\in \BD$ having prescribed $\bb(\T)=\bb$.
Without loss of generality assume $\bb=1$, as we can always scale elements in 
$\BD$. Choose $A,A'\in \RD$ such that
  \be\label{10} \la A,K \ra=\la A', K\ra =1\,, \ \ \forall\, K\in 
\DD/\Gamma\,, \text{ and }
\begin{cases}   \quad \phantom{'}A \pi_S\  \in  \pi_U \RD\;, & \ \\  
                 \quad A' \pi_U\  \in \pi_S \RD\;. & \
\end{cases}
\ee
(For example, letting $T_\DD=\sum_{K\in \DD/\G} M_K$, $T_\DD'=\sum_{K\in 
\DD/\G} M_K'$ with arbitrary
representatives $M_K, M_K'\in K$, we can choose $A= \pi_U T_\DD$, $A'= \pi_S 
T_\DD'$.)
Since $S$ and $U$ generate~$\G$, there are $B,B'\in\RD$ such that 
$ A'-A=B(1-S)-B'(1-U)$, and it follows immediately that 
\be\label{11} \T=A+B(1-S)=A'+B'(1-U) \ee
belongs to $\BD$, and $\bb(\T)=\bb(A)=1$.

We now prove exactness in the middle of the second row in the diagram above. 
Let
$\xi\in \BD$, and we show that $\xi\in\JD$. Indeed,  
we have $\la \xi, K \ra =0$ for all cosets $K\in \DD/\Gamma$, and since $S$ 
and $U$ generate~$\G$ we have 
\[ \xi \=A(1-\pi_S)\+B(1-\pi_U)\,,\]
for some $A,B\in\R$. Since $\xi$ satisfies the second relation in~\eqref{Bb}, 
setting $X=(1-\pi_S)A(1-\pi_S)$ we have $X\pi_U=0$. But $X\pi_S=0$ as well, 
and we conclude from Lemma~\ref{L1} that $X=0$. Since $\ker(1-\pi_S)=\im 
\pi_S$, 
it follows that $A(1-\pi_S)\in \pi_S\R$, and by multiplying on the right by
$(1-\pi_S)=(1-\pi_S)^2$, we obtain $A(1-\pi_S)\in \pi_S\R(1-\pi_S)$. 
Similarly one shows $B(1-\pi_U)\in \pi_U\R (1-\pi_U)$, and we conclude that 
$\xi\in \J$.

Coming back to the proof of $\a=-\b$, we assume without loss 
of generality that $\DD=\G \sm 100n \G$. All operators 
$\T\in \BD$ with $\b(\T)=1$ differ by an element in $\JD$, by the exactness 
proved in the previous paragraph, so they have the same value of $\a(\T)$ as 
well. Therefore it is enough to prove that $\a(\T)=-1$ for a particular such 
element.

For $n=1$, the claim can be verified using the example following 
Lemma~\ref{L4}. Hereafter we assume $n>1$, and we construct an 
element $\T\in\BD$ as in~\eqref{11}, by making a particular choice 
of $A,A'$.

Let $K_0\in \Delta/\Gamma $ be the coset  $\sm 100n \Gamma$. Clearly $UK_0\ne 
K_0$ and $SK_0\ne K_0$, so we can take $A,A'$ in~\eqref{10} to be given by
$$ A\= (1+U+U^2) \sm 100n +\cdots\,, \quad A'= (1+S) \sm 100n+\cdots\,,$$
where the part of $A,A'$ supported on other cosets than $K_0, SK_0=U^2 K_0, 
UK_0$ can be chosen such that~\eqref{10} is satisfied. We have
  $$ A_{K_0}\=A'_{K_0}\= \sm 100n\,,
 \quad A_{SK_0}\=U^2 A_{K_0}\=S A_{K_0} T^{-n}\,,
 \quad A'_{SK_0}\=S A_{K_0}\,.$$ 
Using that $1-T^{-n}= (1+T^{-1}  \+ \cdots \+ T^{-n+1})
  \,\bigl[(1-S)+S(1-U^2)\bigr]$ we get 
  $$A'_{SK_0} \m A_{SK_0} 
  \=S A_{K_0}(1+T^{-1}  \+ \cdots \+ T^{-n+1})
  \,\bigl[(1-S)+S(1-U^2)\bigr]\;,$$ 
which implies that 
$B_{SK_0}(1-S)= S A_{K_0}(1+T^{-1}+\cdots+T^{-n+1})(1-S)$
in~\eqref{11},  and we conclude:
$$
\T_{K_0}\m S \T_{SK_0} \= \sm 100n \bigl[1-T^{-n} \m 
(1+T^{-1}+\cdots+T^{-n+1})(1-S)\bigr] .
$$
On the other hand, by Theorem \ref{T1} we know that $\T-\a \wT_\DD\in\I$ for 
some $\wT_\DD$ satisfying~\eqref{Aa} and a number $\a=\a(\T)$. Taking only 
that part of~\eqref{Aa} supported on matrices in $K_0$ we have (using that 
$TK_0=K_0$)
$$
\T_{K_0}\m S \T_{SK_0} \m \a(\T) \sum_{b \!\!\!\!\pmod{n}} \sm1b0n(1-S)\ 
\inn (1-T)\R\,.
$$
From the last two equations it follows that 
$(1+\a(\T))\cdot\sum_{b \!\!\pmod{n}}\sm1b0n(1-S)\in (1-T)\R$.
Since the matrices in the last sum are in distinct $\G_\infty$-orbits 
in $\G_\infty\bsh\DD$, from~\eqref{T} we obtain that $\a(\T)=-1$.
\end{proof}

For the proof of Theorem~\ref{T0}, we only need the following immediate
consequences of Theorem~\ref{T2}.
\begin{cor} \label{C1} Let $\DD=\M_n$ and $\T\in \BD$, that is, 
$\T$ satisfies property~\eqref{B} in the introduction. 
We have $\la \T, K \ra =-1$ for all cosets $K\in\DD\bsh\G$ if and only if
$\T$ satisfies~\eqref{A}.    
\end{cor}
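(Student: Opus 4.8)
The plan is to reduce the statement to the single-double-coset case already settled in Theorem~\ref{T2}, since $\M_n$ is in general a union of several double cosets while Theorems~\ref{T1} and~\ref{T2} are phrased for one. First I would write $\M_n=\bigsqcup_i \DD_i$ as a finite disjoint union of double cosets $\DD_i=\G\dd_i\G$, giving $\R_n=\bigoplus_i \R_{\DD_i}$ with $\R_{\DD_i}:=\Q[\DD_i]$. Because double cosets are invariant under both left and right multiplication by $\G$, and the idempotents $\pi_S,\pi_U\in\R_1$ therefore map each $\R_{\DD_i}$ into itself on either side, every structure in the statement respects this decomposition. Writing $\T=\sum_i\T_i$ with $\T_i\in\R_{\DD_i}$, one checks that $\T\in\BD$ if and only if $\T_i\in\B_{\DD_i}$ for all $i$ (the conditions $\T\pi_S\in\pi_U\R$ and $\T\pi_U\in\pi_S\R$ break up coset by coset), and likewise $(1-T)\R_n=\bigoplus_i(1-T)\R_{\DD_i}$. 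In particular each $\T_i$ lies in $\A_{\DD_i}=\B_{\DD_i}\cup\cdots$, so that $\a(\T_i)$ and $\b(\T_i)$ are defined.

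Next I would unwind property~\eqref{A} in terms of the pieces $\T_i$. Since $\G\bsh\M_n=\bigsqcup_i \G\bsh\DD_i$, the set $\M_n^\infty$ of $\infty$-fixing representatives splits as the disjoint union of the sets $\M_n^\infty\cap\DD_i$, each of which is a valid choice of $\infty$-fixing representatives for $\G\bsh\DD_i$; hence $T_n^\infty=\sum_i T_{\DD_i}^\infty$ with $T_{\DD_i}^\infty:=\sum_{M\in\M_n^\infty\cap\DD_i}M$. Feeding this into the left side of~\eqref{A} and using that the $\DD_i$-component of $(1-S)\T-T_n^\infty(1-S)$ is exactly $(1-S)\T_i-T_{\DD_i}^\infty(1-S)$, I would conclude from the directness of $\bigoplus_i(1-T)\R_{\DD_i}$ that $\T$ satisfies~\eqref{A} if and only if each $\T_i$ satisfies condition~\eqref{Aa} for $\DD_i$. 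By Theorem~\ref{T1} this is precisely the normalization $\a(\T_i)=1$ for every $i$.

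Finally I would translate the coset-counting condition the same way. Every right coset $K\subset\M_n$ lies in a unique $\DD_i$, so $\la\T,K\ra=\la\T_i,K\ra$, and by Theorem~\ref{T2}(a) this value is the constant $\b(\T_i)$ as $K$ ranges over $\DD_i/\G$. Thus the hypothesis that $\la\T,K\ra=-1$ for all right cosets $K\subset\M_n$ is equivalent to $\b(\T_i)=-1$ for every $i$. Now Theorem~\ref{T2}(b), which gives $\b(\T_i)=-\a(\T_i)$, closes the loop: $\b(\T_i)=-1$ for all $i$ is the same as $\a(\T_i)=1$ for all $i$, i.e.\ the same as property~\eqref{A}.

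I expect the only genuine work to be the bookkeeping of the first two paragraphs, namely verifying that membership in $\B$, property~\eqref{A}, and the splitting of $T_n^\infty$ are all compatible with the double-coset decomposition of $\M_n$. Once this compatibility is recorded, the corollary is a formal consequence of the identity $\b=-\a$ supplied by Theorem~\ref{T2}, and no further computation is required.
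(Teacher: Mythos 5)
Your proposal is correct and follows essentially the same route as the paper: decompose $\M_n$ into double cosets, check that membership in $\B$, property~\eqref{A}, and the coset-sum condition all split accordingly, and then apply Theorem~\ref{T2} (the constancy of $\la\T,\cdot\ra$ on each $\DD_i/\G$ together with $\b=-\a$) on each piece. The paper's proof is just a terser version of this same bookkeeping.
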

\begin{cor}\label{C2} Let $\DD=\M_n$ and $\T=\wT_n\in \R_n$ 
an element satisfying both~\eqref{A} and~\eqref{B}. 
Then for each conjugacy class $X\subset \DD$ the quantity
$\la \T, X \ra$ depends only on $X$ and not on the choice of~$\T$.
\end{cor}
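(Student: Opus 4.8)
The plan is to reduce the corollary to showing that the coefficient-sum functional $\ell_X(\xi):=\la\xi,X\ra$ annihilates the ideal $\J$ of~\eqref{9}. Fix two elements $\T,\T'\in\R_n$ each satisfying both~\eqref{A} and~\eqref{B}, and set $\xi=\T-\T'$; since $\la\T,X\ra-\la\T',X\ra=\ell_X(\xi)$, it suffices to prove $\ell_X(\xi)=0$.

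First I would check that $\xi\in\J$. Since property~\eqref{B} is a pair of $\R_\G$-linear membership conditions, $\B$ is closed under subtraction, so $\xi\in\B$. Subtracting the two copies of~\eqref{A} gives $(1-S)\xi\in(1-T)\R$ (distinct choices of $T_n^\infty$ differ by an element of $(1-T)\R_n$, so this is harmless), whence $\xi\in\I$ by Lemma~\ref{L2}. Therefore $\xi\in\I\cap\B=\J$ by Lemma~\ref{L4}. Equivalently, Corollary~\ref{C1} gives $\la\T,K\ra=\la\T',K\ra=-1$ for every right coset~$K$, so $\b(\xi)=0$ and the exact bottom row of the diagram after Theorem~\ref{T2} again yields $\xi\in\JD$.

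The key step is a trace property of $\ell_X$: for every $g\in\R_\G$ and every $\zeta\in\R$ one has $\ell_X(g\zeta)=\ell_X(\zeta g)$. By linearity it suffices to treat $g=\g\in\G$. Writing $\zeta=\sum c_N N$, a direct computation gives $\ell_X(\g\zeta)=\la\zeta,\g^{-1}X\ra$ and $\ell_X(\zeta\g)=\la\zeta,X\g^{-1}\ra$, and these coincide because $X$ is a conjugacy class: from $\g X\g^{-1}=X$ one gets $X=\g^{-1}X\g$, hence $X\g^{-1}=\g^{-1}X$. Thus $\ell_X$ is a class function, in the sense that it commutes past any element of the group ring.

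Finally I would combine this with idempotency. Writing a general element of $\J$ as $\pi_S R(1-\pi_S)+\pi_U R'(1-\pi_U)$ with $R,R'\in\R$, the trace property moves each left idempotent to the right:
\[
\ell_X\bigl(\pi_S R(1-\pi_S)\bigr)=\ell_X\bigl(R(1-\pi_S)\pi_S\bigr)=0,
\]
since $(1-\pi_S)\pi_S=\pi_S-\pi_S^2=0$, and identically $\ell_X(\pi_U R'(1-\pi_U))=0$ because $(1-\pi_U)\pi_U=0$. Summing shows that $\ell_X$ vanishes on all of $\J$, so $\ell_X(\xi)=0$ and $\la\T,X\ra=\la\T',X\ra$. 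The only real obstacle is spotting the trace identity $\ell_X(g\zeta)=\ell_X(\zeta g)$; once it is in hand, the idempotency relations $(1-\pi_S)\pi_S=(1-\pi_U)\pi_U=0$ do all the remaining work, so the rest is routine.
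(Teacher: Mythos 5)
Your proof is correct, and while it follows the same two-step skeleton as the paper (show $\T-\T'\in\J$, then show that $\la\,\cdot\,,X\ra$ annihilates $\J$), both steps are carried out by genuinely different means. For the first step the paper decomposes $\M_n$ into double cosets and invokes the exact sequence $0\to\J_{\DD'}\to\B_{\DD'}\to\Q\to0$ following Theorem~\ref{T2} (both restrictions have $\a=1$, hence the same $\b=-1$, so the difference lies in the kernel); you instead observe directly that $\T-\T'\in\B$ and, by subtracting the two instances of~\eqref{A} and applying Lemma~\ref{L2}, that $\T-\T'\in\I$, whence $\T-\T'\in\I\cap\B=\J$ by Lemma~\ref{L4}. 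This is more economical, as it bypasses Theorems~\ref{T1} and~\ref{T2} entirely for this step (your parenthetical about different choices of $T_n^\infty$ differing by an element of $(1-T)\R_n$ is the right thing to check and is correct). For the second step the paper simply asserts that $\JD$ is spanned by the elements $M-\g M\g^{-1}$, which visibly have zero coefficient sum over any conjugacy class; you instead establish the trace identity $\ell_X(g\zeta)=\ell_X(\zeta g)$ and kill the generators of $\J$ using $(1-\pi_S)\pi_S=(1-\pi_U)\pi_U=0$. These are two phrasings of the same underlying fact --- indeed $\pi_S R(1-\pi_S)$ is literally the commutator $\pi_S\cdot R(1-\pi_S)-R(1-\pi_S)\cdot\pi_S$ since the second term vanishes --- but your version supplies the verification that the paper leaves implicit, which is a modest gain in completeness at no cost in length.
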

\begin{proof}[Proof of Corollaries] We decompose $\M_n=\bigsqcup \DD'$ into a 
finite
disjoint union of double cosets $\DD'$. For $\T\in\BD$, 
we have a corresponding decomposition $\T=\sum \T_{\DD'}$.
Clearly $\T$ satisfies~\eqref{A} if and only if all such $\T_{\DD'}$ 
satisfy~\eqref{A}, and the same holds for the 
condition $\la \T,K \ra =-1$ for all cosets $K\in\DD\bsh\G$. Therefore
Corollary~\ref{C1} follows from part (a) of Theorem~\ref{T2}. 

If two elements $\T, \T'\in\BD$ satisfy~\eqref{A},  by the 
exact sequence following Theorem~\ref{T2} we 
have $\T_{\DD'}-\T'_{\DD'}\in\J_{\DD'}$ for each double coset 
$\DD'\subset\DD$, so $\T-\T'\in\JD$. But $\JD$ is spanned by 
$M-\g M \g^{-1}$  for $\g\in \G, M\in \DD $, so  
$\la \T,X \ra=\la \T', X\ra$ for each conjugacy class $X$, proving 
Corollary~\ref{C2}. 
\end{proof}

\section{An explicit Hecke operator}\label{S3}

Explicit elements $\wT_n\in \Q[\M_n]$ satisfying condition~\eqref{A} 
were first given by Manin~\cite{M73} using continued fractions 
(as re-interpreted in~\cite{Z2}, where this condition was introduced),
and other constructions were given in~\cite{Z} and~\cite{Me}.
In this section we prove Theorem~\ref{T0} by giving an explicit element 
satisfying all \emph{three} 
properties~\eqref{A}, \eqref{B} and~\eqref{C}. 
Since property~\eqref{C} is the hardest to prove, we start with an element
that satisfies it by construction, and then show that it verifies~\eqref{B}
and property~(i) of Theorem~\ref{T0} as well. The corollaries at 
the end of the 
previous section then show that it satisfies~\eqref{A} as well.  

Since we want to give a uniform formula for all $n$, it is convenient 
to introduce $\wR=\bigotimes_n\R_n$, the vector space of infinite 
formal linear combinations of elements of~$\R$ with only finitely many 
elements of any fixed determinant. We look for~$\wT$ of the form
$\wT\= -E\+H \+\sum c_i(M_i-\g_iM_i\g_i^{-1})$, 
where $E\in \wR$  contains representatives of all elliptic, and $H\in\wR$ 
of all hyperbolic split together with scalar conjugacy classes in~$\M$, 
and  such that $\la -E+H,X \ra=w(X)$ for all conjugacy classes 
$X\subset \M$, with $w(X)$ 
defined in the introduction.  Any such $\wT$ satisfies~\eqref{C}.

Choosing representatives for elliptic conjugacy classes amounts to choosing a 
fundamental domain of $\G$ acting on the upper half plane $\H$. 
Let~$\chi$ be the characteristic function of the fundamental domain 
$\F$ shown in Figure~\ref{F1},
\begin{figure}[h]
\begin{tikzpicture}[scale=1.5]
   \draw[->] (0,0) -- (0,2) node[anchor=east] {$y$};
   \draw[->] (0,0) -- (1.5,0) node[anchor=south] {$x$};
   \draw (0,0) arc (180:120:1);
   \draw (0.5,0.866)--(0.5,2);
   \node at (0.7,0.9) {$\rho$};
   \node[draw,circle,inner sep=1pt,fill] at (0.5,0.866){};
   \node[draw,circle,inner sep=1pt,fill] at (0,1){};
   \node at (-0.2,1) {$i$};
   \node at (0.3, 1.5) {$\F^+$};
   \node at (0.2, 0.8) {$\F^-$};
   \draw[dashed] (0,1) arc (90:60:1);
\end{tikzpicture}
\caption{ The fundamental domain $\F=\{z\in \H\;:\; 0\le \re z\le\dfrac 12, 
|z-1|\ge 1 \}$.}
\label{F1}
\end{figure}
modified on the boundary by setting $\chi(z)$ equal to $1/2\pi$ times 
the angle subtended by $\F$ at~$z$ (i.e.,~$\chi$ is 1 in the interior 
of $\F$, 0 outside of $\F$, 1/2 on the boundary points different from the 
corner~$\rho=e^{\pi i/3}$, and 1/3 at $\rho$).  Our choice of $\wT$ is then
\be\label{12}
  \wT\= -E\+H \+ X-S X S\+Y-U^2 Y U \+Z-U^2 Z U\,,\ee
with $E,H,X,Y,Z\in\wR$ defined by 
$$E\=\sum_{M\ \mathrm{elliptic}} \chi(z_M) M\,=\,\smm{0\le a-d\le 
-b;\; a-d\le c}{b<0<c}\,,  \qquad 
  H\= \smm{a-d\le -b\le c}{c\=0<a}\,,
  $$
 \[ X\=\smm {0\le a-d;\; -b \le c}{ b<0<d}\,,\quad 
Y\=\smm {a-d\le -b \le c}{0<c<a}\,, \quad 
Z\=\smm {a-d\le c\le -b}{0<a;\; 0<c}\,. 
\]
Here $z_M$ is the unique fixed point in $\H$ of an elliptic matrix~$M$, and the 
notation $\la \#\ra$, where $\#$ is a collection of inequalities written
on two lines, means the sum $\sum c(M) M \in \wR$ over matrices 
$M=\sm abcd$ with entries satisfying these inequalities, weighted with a
coefficient $c(M)\in\{1,\,\frac 12,\, \frac14,\, \frac13,\,\frac16 \}$ 
according to the number of inequalities in the \emph{first line} of~$\#$
that become equalities for~$M\,$: it is~1 if there are no 
equalities, $\frac12$ if there is exactly one, and $\frac14$, $\frac13$
or~$\frac16$ if there are two and they are independent ($A\le B$, 
$C\le D$), overlapping ($A\le B$, $A\le C$), or nested ($A\le B\le C$), 
respectively. Note that in these definitions the first line of~$\#$ involves
only the coefficients of the quadratic form $Q_M= [c, d-a,-b]$ 
associated to~$M$. Note also that the definition of the coefficient 
$c(\,\cdot\,)$ behaves correctly (additively) if $\#$ splits up as a union 
of two sets of inequalities, as we will use several times below. 
Recall that elements in $\M$ are defined up to $\pm 1$; we always choose 
representatives with non-negative lower left entry.

As already explained, the above choice of~$\wT$ automatically 
satisfies property~\eqref{C}. The difficult part was to find a choice 
of elements $X,Y,Z$ for which it satisfies~\eqref{B} as well, and this 
particular choice was found numerically with the help of a computer. 
Before checking property~\eqref{B}, we first show that 
formula~\eqref{12} hides considerable cancellation. 

\begin{lemma}\label{L5} \emph{(a)} The element $\wT$ belongs to $\wR$, namely 
$\wT=\sum_{n=1}^\infty \wT_n$, where $\wT_n\in\Q[\M_n]$.

   \emph{(b)} The element $\wT$ simplifies to
\be\label{13}
\wT\=\smm {a-d\le -b\le c}{ 0\le c<a}-\smm{-b\le a-d\le c}{b<d\le 0} 
-\smm{0\le a-d\le c \le -b}{a\le 0< c}
-\smm{0\le a-d\le -b\le c}{d\le 0<-b}.
\ee
\end{lemma}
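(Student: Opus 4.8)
The plan is to prove Lemma~\ref{L5} in two stages, treating (a) and (b) together by carefully tracking which matrices receive nonzero coefficients in the formal sum~\eqref{12} and with what weight. The key observation is that all seven terms $-E$, $H$, $X$, $-SXS$, $Y$, $-U^2YU$, $Z$, $-U^2ZU$ are each sums over matrices $M=\sm abcd$ of fixed determinant $n=ad-bc$ with coefficients supported on regions cut out by inequalities among the quantities $a-d$, $-b$, $c$ (the coefficients of the associated quadratic form $Q_M=[c,d-a,-b]$) together with sign conditions on the individual entries. For part (a), I would check that for each fixed $n$ only finitely many matrices satisfy \emph{any} of these inequality systems, so that $\wT_n\in\Q[\M_n]$ is a finite sum; the point is that the region constraints (e.g. $0\le a-d\le -b\le c$ with $ad-bc=n$) bound all entries in terms of $n$. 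This is essentially bookkeeping once the conjugation maps $M\mapsto SMS$ and $M\mapsto U^2MU$ are written out explicitly as linear changes of the entries $(a,b,c,d)$.

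The heart of the argument is part (b), where I would compute the seven regions after conjugation and show that they telescope into the four regions on the right of~\eqref{13}. First I would record the explicit action of conjugation on entries: since $S=\sm0{-1}10$ one has $SMS=\sm d{-c}{-b}a$, so $Q_{SMS}=[-b,a-d,-c]$, i.e. conjugation by $S$ swaps the roles of $c$ and $-b$ and negates the middle coefficient's two defining entries in a controlled way; similarly I would compute $U^2MU$ using $U=\sm1{-1}10$, $U^2=\sm0{-1}11$, obtaining another linear substitution on $(a,b,c,d)$. Applying these to the inequality systems defining $X$, $Y$, $Z$ transforms each into a new system in the original variables, and the normalization convention (lower-left entry non-negative, representatives mod $\pm1$) must be applied consistently so that each conjugated term is again written as a $\la\#\ra$-sum with its correct fractional coefficients.

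The main obstacle — and the step deserving the most care — is verifying that the \emph{coefficients} match up correctly on the overlaps of all these regions, not merely the supports. The weighting rule ($1$, $\frac12$, $\frac14$, $\frac13$, $\frac16$ according to how many first-line inequalities are equalities and whether they are independent, overlapping, or nested) means that on the boundaries between regions the contributions from several of the seven terms must combine additively to give exactly the boundary coefficient prescribed by the four target regions in~\eqref{13}. The excerpt explicitly flags that ``the definition of the coefficient $c(\,\cdot\,)$ behaves correctly (additively) if $\#$ splits up as a union of two sets of inequalities,'' so the strategy is to partition the union of all seven supports into cells determined by which of the inequalities among $a-d$, $-b$, $c$ hold as equalities, and on each cell add up the (signed, weighted) contributions. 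I expect most cells to cancel entirely — this is the ``considerable cancellation'' the authors mention — leaving precisely the four surviving regions. Because the middle coefficient $a-d$ changes sign under the relevant conjugations while $c$ and $-b$ get permuted, the bulk of the verification is a finite case analysis over the possible equality patterns, and I would organize it by the sign of $c$ and the position of $a-d$ relative to $0$, $-b$, and $c$, matching each cell against the correspondingly indexed region on the right-hand side of~\eqref{13}.
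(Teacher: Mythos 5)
Your plan for part (a) is essentially the paper's argument and is fine in substance: for $E$, an elliptic matrix of determinant $n$ with fixed point in $\F$ corresponds to a reduced form of one of finitely many negative discriminants; for $H$, $X$, $Y$, and the part of $Z$ with $d>a$, the sign conditions force $a,d>0$ and $c,-b>0$ (or $c=0$ for $H$), so $ad-bc=n$ has finitely many solutions; the remaining part of $Z$ is again elliptic with fixed point in $\F$. No objection there.

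For part (b), however, what you have written is a description of a verification you have not carried out, and the stratification you propose would not by itself organize the computation. The paper does not proceed cell-by-cell over equality patterns among $a-d$, $-b$, $c$; it splits the terms by \emph{sign conditions on individual entries} --- $E=A+B$ according to whether $z_M$ lies in $\F^-$ or $\F^+$, then $A=A_1+A_2$ by $a>0$ or $a\le 0$, $B=B_1+B_2$ by $d>0$ or $d\le 0$, $Z=Z_1+Z_2$ by $a\le d$ or $a\ge d$, and $U^2ZU=Z_3+Z_4$ by $d>0$ or $d\le 0$ --- and then establishes exact identities of weighted sums, namely $Z_1=SXS$, $Z_2=A_1$, and $X=B_1+Z_3+U^2YU$. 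After substitution everything collapses to $(Y+H)-Z_4-A_2-B_2$, and these are precisely the four terms of~\eqref{13} ($H$ being absorbed into $Y$ as its $c=0$ boundary). Your ``cells determined by which first-line inequalities are equalities'' are the right tool for checking that boundary weights agree \emph{once two regions are already known to have the same support}, but they do not tell you which of the eight terms of~\eqref{12} is supposed to cancel against which; that matching is governed by the second-line sign conditions and is the actual content of the lemma. Until such identities are exhibited and verified --- including the effect of the $\pm1$ normalization and the possible change of weight type (independent/overlapping/nested) under $M\mapsto SMS$ and $M\mapsto U^2MU$ --- the proof is not there. A small but symptomatic slip: for $SMS=\sm d{-c}{-b}a$ the associated form is $Q_{SMS}=[-b,\,a-d,\,c]$, not $[-b,a-d,-c]$; conjugation by $S$ swaps $c$ with $-b$ and negates the middle coefficient.
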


\begin{proof}
(a) The terms $E$ and $H$ clearly contain finitely many elements of a given
determinant. For $X$ and $Y$, and for $Z$ in the case $d>a$, we have
that $a,d>0$, and $c, -b>0$, so $ad-bc=n$ has finitely many solutions for 
each $n$. For $Z$ in the case $0\le a-d$ the corresponding matrix is elliptic 
with fixed point in $\F$, so the same conclusion holds. 

(b) Splitting $\F$ into the parts $\F^+$, $\F^-$ for which $|z|\ge 1$, $|z|\le 1$, 
as in Figure~\ref{F1}, we have
\[
E\= \smm {0\le a-d\le c \le -b}{0<c}\+\smm {0\le a-d\le -b \le 
c}{b<0}\,=:\,A+B\,.
\]
We decompose: $A=A_1+A_2$, the parts of $A$ for which $a>0$, respectively $a\le 
0$; $B=B_1+B_2$, the parts of $B$ for which $d>0$, respectively $d\le 0$;
$Z=Z_1+Z_2$, the parts for which $a\le d$, respectively $a\ge d$ (with boundary 
coefficients as indicated below); and $U^2 Z U=\smm {-b\le a-d \le  c}{b<0;\; 
b<d}=Z_3+Z_4$, the parts for which $d>0$, respectively $d\le 0$. We check
easily that
\[Z_1\=SXS\=\smm {a-d\le 0;\; c\le -b }{0<a;\; 0<c}\,, 
\quad Z_2\=A_1\=\smm{0\le a-d\le c \le -b}{0<a;\; 0<c}\,,\]
\[X\=B_1+Z_3+U^2 Y U \=  \smm{0\le a-d\le -b \le c}{b<0<d}\+
\smm{-b\le a-d \le  c}{b<0<d}\+\smm {-b\le c\le a-d}{b<0<d}.\] 
 The term $H$ in~\eqref{12} can be absorbed in the 
sum~$Y$ as the boundary term for which $c=0$, yielding the first term 
in~\eqref{13}, 
and~\eqref{12} simplifies to~\eqref{13}, 
where the second, third, and fourth terms are $Z_4$, $A_2$, and $B_2$, 
respectively. 
\end{proof}

\begin{theorem}\label{T4}
\emph{(a)} For each $n\ge 1$ the element $\wT$ satisfies
  \[ \wT\pi_S\in \pi_U \wR\,, \quad \wT\pi_U\in \pi_S \wR\,, \]
namely each component $\wT_n$ satisfies~\eqref{B}.

\emph{(b)} We have $\la \wT,K \ra=-1$ for all right cosets $K=M\G\in\M/ \G$.
\end{theorem}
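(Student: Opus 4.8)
The plan is to verify the two parts of Theorem~\ref{T4} by direct computation from the reduced formula~\eqref{13}, handling (a) first since it feeds into (b). For part (a), I would first rewrite the two membership conditions as operator identities. Since $\pi_S=\frac{1+S}2$ and $\pi_U=\frac{1+U+U^2}3$ are idempotents whose images are the $S$- and $U$-fixed subspaces for left multiplication, the condition $\wT\pi_S\in\pi_U\wR$ is equivalent to $U\cdot\wT(1+S)=\wT(1+S)$, and $\wT\pi_U\in\pi_S\wR$ to $S\cdot\wT(1+U+U^2)=\wT(1+U+U^2)$. Thus part (a) reduces to the two identities
\[(1-U)\,\wT\,(1+S)\=0\,,\qquad (1-S)\,\wT\,(1+U+U^2)\=0\]
in $\wR$, each an equality between finite formal sums in every fixed determinant, to be checked uniformly.

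To prove these identities I would compute the right action of $S=\sm0{-1}10$ and $U=\sm1{-1}10$ on each of the four regions appearing in~\eqref{13}. Each region is cut out by inequalities on the coefficients $(c,\,d-a,\,-b)$ of the quadratic form $Q_M$, and right multiplication by $S$ or $U$ is an explicit affine substitution on these coefficients, carrying each region to another such region. The plan is to show that, after applying $1+S$ (resp.\ $1+U+U^2$), the resulting collection of regions is invariant under left multiplication by $U$ (resp.\ $S$), so that the two sides of each identity match region by region. \textbf{The main obstacle is the bookkeeping of the boundary weights} $1,\tfrac12,\tfrac13,\tfrac14,\tfrac16$: the interiors match routinely, but one must check that on every boundary stratum, where one or two of the defining inequalities become equalities, the weights assigned by the additive rule of~\eqref{12} agree on both sides. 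This is precisely the delicate point that forced the specific numerical choice of $X,Y,Z$, and where I expect essentially all the work to lie.

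For part (b), I would exploit part (a). Since each component $\wT_n$ satisfies~\eqref{B}, its restriction to any double coset $\DD\subset\M_n$ lies in $\BD$ (the idempotents $\pi_S,\pi_U$ act on the right and preserve each $\Q[\DD]$), so Theorem~\ref{T2}(a) gives that $\la\wT,K\ra$ is constant as $K$ ranges over $\DD/\G$. It therefore suffices to evaluate $\la\wT,K\ra$ on one right coset in each double coset $\DD=\G\sm u00v\G$ ($uv=n$, $u\mid v$) and check that the common value is $-1$. To compute $\la\wT,K\ra$ for a fixed right coset $K=M_0\G$, I would decompose $K$ into its $\langle T\rangle$-orbits under right multiplication: the matrices in $K$ sharing a fixed first column $(a,c)$ form a single such orbit, and the first columns that occur are exactly the primitive vectors (with $c\ge0$) of the index-$n$ column lattice $L=M_0\Z^2$. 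Along each orbit the point $(c,\,d-a,\,-b)$ traverses a line with $c$ fixed, so the contribution is a finite weighted count of lattice points of this line inside the four regions of~\eqref{13}; summing these contributions over the primitive vectors of $L$ yields $-1$.

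This reduces part (b) to a concrete, finite computation that I expect to be routine once organized along $\langle T\rangle$-orbits, in contrast to the boundary analysis of part (a); as a consistency check, for $n=1$ the single right coset is $K=\G$ and one verifies $\la\wT_1,\G\ra=-1$ directly. Finally, having established both (a) and (b), Corollary~\ref{C1} shows that $\wT_n$ also satisfies~\eqref{A}, completing the verification that $\wT$ enjoys all three properties~\eqref{A},~\eqref{B}, and~\eqref{C}.
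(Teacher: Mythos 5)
Your reductions are all correct and your overall strategy coincides with the paper's: part (a) is indeed equivalent to the identities $(1-U)\,\wT\,(1+S)=0$ and $(1-S)\,\wT\,(1+U+U^2)=0$ (since $\pi_U\wR$ and $\pi_S\wR$ are exactly the subspaces fixed under left multiplication by $U$ and $S$), and part (b) does reduce, via Theorem~\ref{T2}(a) applied to the restriction of $\wT_n$ to each double coset, to evaluating $\la\wT,K\ra$ on a single right coset per double coset. The paper follows the same two-step outline, proving (a) by exhibiting explicit elements $F,G$ with $\wT=-F-UFS-U^2F+T_1(1-S)$ and $\wT=-G-SGU+T_1(1-U)+\tfrac1{12}[\cdots](1-U)$ as in~\eqref{14}, and proving (b) by a short inequality argument on the coset $\sm 100m\G$.

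The genuine gap is that you have written a plan rather than a proof: the step you yourself flag as ``where essentially all the work lies'' is precisely the content of the theorem, and it is not carried out. For (a), the region-by-region matching of $\wT(1+S)$ with $U\,\wT(1+S)$, including the boundary strata where one or two defining inequalities degenerate and the weights $1,\tfrac12,\tfrac13,\tfrac14,\tfrac16$ must balance, cannot be waved through: as you note, generic choices of $X,Y,Z$ in~\eqref{12} fail exactly at these boundary terms, so until the bookkeeping is done there is no evidence the identities hold. In the paper this occupies an intricate decomposition into a dozen sub-sums with explicitly tabulated boundary corrections. For (b), the ``finite computation'' on the representative coset likewise has to be performed; the paper does it by writing $m=mc(-b)+(-a)(a-md)+a^2\ge m^2c^2$ to kill the third sum in~\eqref{13} and a similar estimate to pin down the two surviving matrices in the fourth sum, each weighted $\tfrac12$, giving the total $-1$. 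Your $\langle T\rangle$-orbit organization is a reasonable alternative scaffolding for that count, but as it stands neither half of the theorem has actually been verified.
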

Theorem~\ref{T0} immediately follows: by construction, each 
component $\wT_n$ of $\wT$ satisfies~\eqref{C}, and the previous theorem and 
Corollary~\ref{C1} show that it satisfies~\eqref{A} as well. Corollary~\ref{C2} 
then shows that \emph{any} element satisfying~\eqref{A}, 
\eqref{B} also satisfies \eqref{C}.

\begin{proof} (a) Write $\wT=T_1-T_2-T_3-T_4$ in~\eqref{13}. We will show more precisely 
that 
\be\label{14}\begin{aligned}
\wT &\= -F-UFS-U^2F+T_1(1-S)\\
\wT &\=-G-SGU+ T_1(1-U)+\frac 1{12} \smp{a-d=c=-b}{d\le 0<a}(1-U)\,,
\end{aligned}
\ee
so that $\wT \pi_S=-(1+U+U^2)F\pi_S$, $\wT \pi_U=-(1+S)G\pi_U$, and part (a) follows.
Here 
$$
F\=\smm{0\le a-d\le c;\; a-d \le -b}{d \le  b< 0 <c}\,,\quad 
G\=\smm{0\le a-d\le c;\; a-d \le -b;\; a\le -b-c}{a \le 0;\; b<0<c }^{*}\,,
$$ 
with $\la\#\ra^*\in\wR$ denoting the sum of matrices  $\sm abcd$, weighted by $1/2$
if at least one inequality on the first line of~$\#$ becomes equality, except that matrices with 
 $a-d=c=-b$ or $a=d=-b-c$ are weighted by 1/4, and $[\#]\in\wR$ denoting the 
 corresponding unweighted sum of matrices.

For a sum $\xi \in \wR$ given by inequalities $\#$ on two lines, 
as in $\la\#\ra$ or $\la\#\ra^*$ above,  we denote by $\xi^\bd$ 
its ``interior'', that is, the subsum of the terms for which at most one inequality on the 
first line becomes equality (therefore the terms are weighted by either 1 or 1/2 if
there is no equality or exactly one equality, respectively).  

To prove the first formula, we decompose $T_3=T_{3,1}+T_{3,2}$, $T_4=T_{4,1}+T_{4,2}$
where $T_{3,1}$,  $T_{4,1}$ and $T_{3,2}$, $T_{4,2}$ have the 
added inequality $b<d$ and $d\le b$, respectively. We have $F\=T_{3,2}+T_{4,2}$ and
\[T_{3,1}\=\smm {0\le a-d\le c\le -b}{a\le0 <c;\; b<d}
,\quad T_{4,1}+T_2\= \smm{0\le a-d\le c;\; -b\le c}{b<d\le0}=:T_5\,,
\]
so $\wT=T_1-T_{3,1}-T_5-F$. We also have 
\[ T_1 S\=\smm {d\le a\le b+c}{b<d\le 0},\ \
UFS =\smm {d\le a \le c+d;\; b+c\le a  }{a\le0<c;\; b<d},\ \
U^2F=\smm {a-d\le c ;\; -b \le c \le a-b}{b<d\le 0<a }\,, \]
and we now decompose the terms by adding inequalities on the first line as follows: $T_1S=T_{1,1}+T_{1,2}$, where $T_{1,1}$, $T_{1,2}$ are obtained by imposing the extra inequality $-b\ge  c$,
respectively $-b\le c$; and $T_5=T_{5,1}+T_{5,2}$,  $T_{3,1}=T_{3,1a}+T_{3,1b}$,
with the range of summation split according as $a\le b+c$ (for $T_{5,1}$ and  $T_{3,1a}$),
or $a\ge b+c$ (for $T_{5,2}$ and $T_{3,1b}$). We check that $T_{1,1}^\bd=T_{3,1a}^\bd$, 
$T_{1,2}^\bd=T_{5,1}^\bd$, and 
\[ \begin{aligned}
T_5\=T_{5,1}^\bd+T_{5,2}^\bd\quad &+\frac13 \smp{a-d=c=-b}{b<d\le 0}
+\frac14 \smp{a=d;\;  c=-b}{b<d\le 0}
+\frac12 \smp{a=0;\;  c=-b}{b<d<0}\\
T_{3,1}\=T_{3,1a}^\bd+T_{3,1b}^\bd&
+\frac14 \smp{a=d ;\;  c=-b}{b<d\le 0}+\frac12 \smp {a=d=b+c}{b<d\le 0}\\
T_1S\=T_{1,1}^\bd+T_{1,2}^\bd\quad& 
 +\frac12 \smp{a=d;\;  c=-b}{b<d\le 0}+\frac16 \smp {a=d=b+c}{b<d\le 0}
+\frac12\smp{a=0;\;  c=-b}{b<d<0}  \\
UFS\=T_{3,1b}^\bd\quad\!\!\phantom{+T_{3,1b}^\bd } &
+\frac13  \smp {a=d=b+c}{b<d\le 0} \\
U^2F\=T_{5,2}^\bd\quad\phantom{+T_{3,1b}^\bd }& + \frac13 \smp{a-d=c=-b}{b<d\le 0}
\end{aligned}  
\]    
(in the sum $T_{3,1b}^\bd$ there is an ambiguity whether to include the term with $a=0=b+c$,
and we choose to include it, by taking the first line inequalities to be $0\le a-d\le c\le a-b$; the inequality $c\le -b$, 
which was part of the first line of $T_{3,1}$,  is then implied by $a\le 0$ on the second line). 
We obtain that $ T_{3,1}+T_5 = T_1S+UFS+U^2F$, so the first equation in~\eqref{14} follows.

To prove the second equation, we first note that $T_2=T_1 U$, 
so $\wT= -T_3-T_4 +T_1(1-U)$. 
We decompose $T_4=T_{4,1}+T_{4,2}$, the subsums for which we add $a\le -b-c$, 
respectively $ -b-c\le a$ on the first line of~$T_4$. We have 
$SGU= \smm{0\le a-d\le -b\le c;\;  -b-c\le a}{-c<d\le 0<-b}^{*}$, 
and we check the following relations
\[ \begin{aligned}
T_3\=T_3^\bd\phantom{\ \ \ +T_{4,2}^\bd} &+\frac16\smp{a-d=c=-b}{a\le0<c}+\frac14\smp{a=d;\;  c=-b}{a\le 0<c}\\
T_4\=T_{4,1}^\bd+T_{4,2}^\bd&+\frac16\smp{a-d=c=-b}{d\le0<c}+
\frac14\smp{a=d;\;  c=-b}{d\le 0<c}+\frac12 \smp{a=d=-b-c}{d<0<-b} +
\frac12 \smp{a=0;\;  c=-b}{0<-d<c}\\
G\=T_3^\bd+ T_{4,1}^\bd\  \ &+\frac14 \smp{a-d=c=-b}{a\le0<c}+
\frac12\smp{a=d;\; c=-b}{a< 0<c}+ \ \frac14 \smp{a=d=-b-c}{d\le 0<-b}\\
SGU\= T_{4,2}^\bd\phantom{\ \ +T_{4,2}^\bd}&+\frac14\smp{a-d=c=-b}{d\le0<a}
 + \frac14 \smp{a=d=-b-c}{d\le 0<-b}+ \frac12 \smp{a=0;\; c=-b}{ 0<-d<c},
\end{aligned}  
\]
hence $T_3+T_4-G-SGU$ involves only the  four terms with $a-d=c=-b$  above.   Moreover 
$$\smp{a-d=c=-b}{d\le 0<c}=\smp{a-d=c=-b}{a\le 0<c}+\smp{a-d=c=-b}{d\le 0<a}, 
\quad \smp{a-d=c=-b}{d\le 0<a}\cdot U= \smp{a-d=c=-b}{a\le 0<c}\,,
$$
and we conclude that 
$ T_3+T_4-G-SGU=\frac 1{12} \smp{a-d=c=-b}{d\le 0<a}(U-1)$, yielding
the second line in~\eqref{14}.

(b) We have a decomposition of $\M$ into a disjoint union of
double cosets
  \[ \M\=\bigcup_{a,m\ge 1} \G \sm a00{am} \G\,, \]
and a corresponding decomposition $\wT=\sum_{a,m}\wT_{a,m}$, with
$\wT_{a,m}$ the part of $\wT$ supported on the corresponding double coset.
Each $\wT_{a,m}$ satisfies~\eqref{B} by part (a), and $\wT_{a,m}=\sm a00a 
\wT_{1,m}$,
so by Theorem~\ref{T2} it is enough to show that $\la 
\wT_{1,m},K_0  \ra=-1$ for all $m$, for the particular coset $K_0=\sm 100m  \G$. 

For $m=1$ we have $\wT_{1,1}=\wT_1=1-\pi_S-\pi_U$, and the claim is clear. 
Assuming therefore $m>1$, we have to find the matrices $M=\sm ab{mc}{md} \in 
K_0$ with $a,b,c,d\in\Z$, $ad-bc=1$, in each of the four terms in~\eqref{13}. 
For the first two terms we have  $\la T_1-T_2, K_0 \ra=0$, as $T_2=T_1U$. 
For the sum $T_3$ we write $m=\det M$ as follows 
\[m=mc(-b)+(-a)(a-md)+a^2\ge m^2c^2\ge m^2\,,
\]
so there are no matrices from $K_0$ in the sum $T_3$. For $T_4$ we write 
\[m=mc(-b)+md(a-md)+m^2d^2\ge b^2+m^2d^2-m|db|
\ge 3m^2d^2/4\,,
\]
so $d=0$, $c=-b=1$, and $0\le a \le 1$. Therefore $\la T_4, K_0\ra =1/2+1/2=1$, so
$\la \wT_{1,m}, K_0 \ra = -1$.
\end{proof}

We already pointed out in the introduction that Theorem~\ref{T0}
immediately implies the classical  Kronecker-Hurwitz class number formula. In fact 
the proof of Theorem~\ref{T4} implies a refinement of this class number formula, 
similar in spirit to the refinement proved by different means in~\cite{PZ}. More precisely,
in~\cite{PZ} the class number formula was proved
by establishing a correspondence between an easily 
countable subset of the right cosets $\M_n/\G$ and half the elliptic conjugacy 
classes of matrices in~$\M_n$, whereas now we obtain a ``weighted'' bijection
between \emph{all} right cosets and an easily countable subset of 
the elliptic conjugacy classes, namely those conjugacy classes containing the
matrices in the third and fourth sums in~\eqref{13}. We indicate the argument briefly.

Define a weighting function $\a $ on $\M$ by setting $\a(M)=0$ for 
non-elliptic~$M$, while $\a(M)$ for an elliptic matrix~$M=\sm abcd \in \M$ 
with $c>0$ is given by 
\[ \a(M)=\chi^+(z_M)\cdot \dd(a)+\chi^-(z_M)\cdot \dd(d)\,, \]
where $\chi^\pm$ are the characteristic functions of the half-fundamental domains 
$\F^\pm$ in Figure~\ref{F1} (having boundary values given as for $\chi$ 
in terms of the angle subtended), and $\delta$ is the characteristic function of 
the set of non-positive integers. Conjugating by  $\sm 0110\in\GL_2(\Z)$  and 
simultaneously changing the sign of the matrix 
to preserve the condition $c>0$
interchanges matrices with fixed point in $\F^+$ with those with fixed point 
in $\F^-$ and interchanges $a$ and $-d$ in the formula for $\a$, so we have
\be \label{18}
\begin{aligned}
\sum_{M\in \M_n } \a(M) &\=  \sum_{M\in \M_n}\chi^-(z_M)\,\bigl(\dd(-d)+\dd(d)\bigr) \\
&\= \sum_{M\in \M_n}\chi^-(z_M) \; +
              \sum_{\substack{  0\le a\le -b\le c \\n=-bc}} \a \bigl(\sm abc0\bigr) \\
&\=\frac12  \sum_{t^2\le 4n}H(4n-t^2)\,+\, \frac12 \sum_{\substack{n=bc\\ b>0}} \min(b,c)\;,
\end{aligned}
\ee
where the final equality follows from the well-known $\G$-equivariant 
bijection between matrices and binary quadratic forms. (The calculation of the 
final sum must be modified slightly for  the terms with $-b=c$ when $n$ is a square, 
but this is compensated by the term $H(0)=-1/12$.) 
The Kronecker-Hurwitz formula can be rephrased by saying that the expression in the last line of~\eqref{18} 
equals $\sigma_1(n)=|\M_n/\G|$.   The above-mentioned refinement is then:  
\begin{prop}\label{P5}
The sum  $\sum_{M\in K}\a(M)$ equals 1 for each right coset $K\in \M/\G$.
\end{prop}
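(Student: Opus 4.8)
The plan is to recognize the weighting $\a$ as the coefficient function of the elliptic part of the explicit operator $\wT$, after which the statement falls out of Theorem~\ref{T4}(b). Write $\wT = T_1 - T_2 - T_3 - T_4$ as in the proof of that theorem, following the four sums in~\eqref{13}. First I would recall from the proof of Lemma~\ref{L5} that the third and fourth sums are exactly $T_3 = A_2$ and $T_4 = B_2$, the subsums of the elliptic representative sum $E = A + B$ cut out by $a \le 0$ and $d \le 0$, where $A = \sum_M \chi^+(z_M)\,M$ and $B = \sum_M \chi^-(z_M)\,M$, the sums running over the chosen ($c>0$) elliptic representatives. Since $\dd(a)$ and $\dd(d)$ are the characteristic functions of the conditions $a \le 0$ and $d \le 0$, this identifies
\[
T_3 + T_4 \= A_2 + B_2 \= \sum_M \bigl[\chi^+(z_M)\,\dd(a) + \chi^-(z_M)\,\dd(d)\bigr]\,M \= \sum_M \a(M)\,M\,,
\]
the sum over the chosen representatives, with $\a$ vanishing on the non-elliptic ones, which do not occur in $T_3 + T_4$. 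In particular $\la T_3 + T_4, K \ra = \sum_{M \in K}\a(M)$ for every right coset $K \in \M/\G$.

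Next I would remove the first two terms. Because $T_2 = T_1 U$ with $U \in \G$, and right multiplication by an element of $\G$ permutes the matrices inside each right coset $K = M_0 \G$, one has $\la T_2, K \ra = \la T_1 U, K \ra = \la T_1, K U^{-1} \ra = \la T_1, K \ra$; so $T_1$ and $T_2$ together contribute nothing to $\la \wT, K \ra$. Invoking Theorem~\ref{T4}(b), which asserts $\la \wT, K \ra = -1$ for every right coset, I then obtain
\[
-\,\la T_3 + T_4, K \ra \= \la \wT, K \ra \= -1\,,
\]
whence $\sum_{M \in K}\a(M) = \la T_3 + T_4, K \ra = 1$, as claimed. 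Summing this identity over the $|\M_n/\G|$ cosets of a fixed determinant $n$ recovers the global evaluation recorded in~\eqref{18}; the present argument is its coset-by-coset refinement.

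I do not expect a serious obstacle, since the argument is essentially a reinterpretation of work already done. The one point requiring care is the first displayed identity, namely that the equality-counting weights of the $\la\#\ra$ notation carried by $T_3$ and $T_4$ in~\eqref{13} agree, boundary matrix by boundary matrix, with the angle-subtended values of $\chi^\pm$ and the endpoint convention for $\dd$ entering the definition of $\a$. This is precisely the compatibility already established in the decomposition $E = A + B$ and the identification of the third and fourth sums as $A_2, B_2$ in the proof of Lemma~\ref{L5}, so rather than redo the boundary bookkeeping I would simply cite that lemma.
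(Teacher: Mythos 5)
Your proposal is correct and follows essentially the same route as the paper: the paper likewise writes $\wT=T_1(1-U)-T_3-T_4$, invokes Theorem~\ref{T4}(b) to get $\la T_3+T_4,K\ra=1$, and identifies $T_3+T_4=\sum_{M}\a(M)M$ from the definition of $\a$. Your extra care in matching the boundary weights of $T_3=A_2$, $T_4=B_2$ with the angle-subtended values of $\chi^\pm$ is a reasonable elaboration of what the paper compresses into ``by the very definition of $\a$.''
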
 
\begin{proof} In the proof of Theorem~\ref{T4} we saw that 
$\wT=T_1(1-U)-T_3-T_4$, so part (b) of the theorem gives 
$\la T_3+T_4,K \ra =1$ for each right coset $K\in \M\bsh \G$. 
But $T_3+T_4=\sum_{M\in\M}\a(M) M$ by the very definition of $\a$. 
\end{proof}

\noindent {\bf Acknowledgements.} The first author was supported in part by 
CNCS-UEFISCDI grant PN-III-P4-ID-PCE-2016-0157. He would like to thank the MPIM in Bonn
for providing support and a stimulating research environment during several visits
while working on this paper.

\end{document}